\documentclass[a4paper,11pt]{article} 

\usepackage{graphicx} 
\usepackage{amsfonts}
\usepackage[hidelinks]{hyperref}
\usepackage{algorithm2e}
\usepackage[dvipsnames]{xcolor}
\usepackage{tikz} 
\usepackage{listings}
\usepackage{comment}
\usepackage{url}
\usepackage{xcolor}
\usepackage{amssymb}
\usepackage{mathtools}
\usepackage{dirtytalk}
\usepackage{float} 
\usepackage{amsmath}        	
\usepackage[utf8]{inputenc} 	
\usepackage[T1]{fontenc}    	
\usepackage{epstopdf}       	
\usepackage{enumitem}
\usepackage{mathrsfs}  
\usepackage{amsthm}

\title{Non-affine Families of 8 x 8 Complex Hadamard Matrices}

\author{
Tuomo Valtonen\\
Department of Information and Communications Engineering\\
Aalto University School of Electrical Engineering\\
P.O.\ Box 15600, 00076 Aalto, Finland\\
{\tt tuomo.valtonen@aalto.fi}}
\date{}

\begin{document}

\newcommand\numberthis{\addtocounter{equation}{1}\tag{\theequation}}

\renewcommand{\Re}{\mathop{\vcenter{\hbox{\small $\mathrm{Re}$}}}}

\renewcommand{\Im}{\mathop{\vcenter{\hbox{\small $\mathrm{Im}$}}}}

\theoremstyle{definition}                
\newtheorem{theorem}{Theorem}[section]
\newtheorem{lemma}[theorem]{Lemma}
\newtheorem{proposition}[theorem]{Proposition}
\newtheorem{conjecture}[theorem]{Conjecture}
\newtheorem{definition}[theorem]{Definition}
\newtheorem{example}[theorem]{Example}
\newtheorem{problem}[theorem]{Problem}

\theoremstyle{remark}                    
\newtheorem*{remark}{Remark}             
\maketitle

\begin{abstract}

Six non-affine 3-parameter families of complex Hadamard matrices of order 8 are
presented. These families contain Hadamard matrices that are not equivalent to any previously known Hadamard matrices in the literature. Each family arises from unimodular points of an affine variety defined by palindromic
polynomials. The families are given as an image of a function that solves the corresponding system of polynomials on a domain that guarantees unimodularity of the solutions.

\end{abstract}

\section{Introduction}

Let $\mathbb{T}$ denote the set of unimodular complex numbers, that is, complex numbers with modulus 1. Let $ M_n(\mathbb{T})$ be the set of $n\times n$ matrices over $\mathbb{T}$.  A matrix $H\in M_n(\mathbb{T})$ is called a \textit{complex Hadamard matrix} of order $n$ if it satisfies the equation $HH^\dagger=nI$, where $H^\dagger$ is the conjugate transpose of $H$ and $I$ is the identity matrix. This is equivalent to the condition that the rows (and columns) of $H$ are orthogonal in $\mathbb{C}^n$. The set of complex Hadamard matrices of order $n$ is denoted by $\mathbb{H}(n).$

A matrix $B \in \mathbb{H}(n)$ whose entries are $q$th roots of unity is said to be of \textit{Butson-type}, abbreviated as BH$(n, q)$. For every order $n$, the \textit{Fourier matrix} $F_n := [\exp(\frac{2\pi i}{n}jk)]_{j,k=1}^n$ is a BH$(n,n)$ matrix.

Complex Hadamard matrices have many applications in theoretical quantum physics. Examples include mutually unbiased bases (MUB)  \cite{mub2, mub1}, quantum teleportation schemes \cite{teleport} and quantum error correction \cite{ weight_quantum, errorbases}. They are also used in studies of purely mathematical subjects such as operator algebras \cite{ haagerupSet, OA1}, equiangular lines \cite{equitengular}, and spectral sets \cite{ sepctarl2, S62}. It is also believed that a better understanding of complex Hadamard matrices might shed light on the long-standing Hadamard conjecture of the existence of BH$(n,2)$ matrices. 

Complex Hadamard matrices have been classified only for orders less than $6$ \cite{haagerupSet}. In this article, we present six novel families of complex Hadamard matrices of order 8. Each family contains matrices that are not equivalent to any matrix previously appearing in the literature.

The outline of this paper is as follows. In Section \ref{bg_section}, we review some basic theory and definitions, generalise the concept of a palindromic polynomial to multivariate polynomials, and derive a few helpful lemmas. In Section~\ref{sec_4_prev_mat}, we list complex Hadamard matrices of order 8 already known in the literature. As we will observe, previously presented lists contain imperfections. In Section~\ref{results}, we present the new families. In Section~\ref{ineq_sec}, we establish that the families are pairwise inequivalent by leaning on the results of an exhaustive computer search. Finally, we show that there exist Butson matrices that do not fit in any of the known families.

\section{Background material}
\label{bg_section}
\subsection{Families of complex Hadamard matrices}

The general theory and applications of Hadamard matrices are covered in \cite{agaian} and \cite{horadam}. The complex case has been studied more comprehensively in \cite{Szöllősi, cgthm}.
 \begin{definition}
 
 \label{equiv}
     Two complex Hadamard matrices $H_1$ and $H_2$ are said to be \emph{equivalent} if there exist permutation matrices $P_1, P_2$ and diagonal matrices $D_1, D_2 \in M_n(\mathbb{T})$ such that
     $$H_1=D_1P_1H_2P_2D_2.$$
     When two matrices are equivalent, we write $H_1\cong H_2$. If the equality can be written so that $D_1=D_2=I$, we say that the matrices are \textit{permutation equivalent}.
 \end{definition}

A complex Hadamard matrix is called \textit{dephased} if its first row and first column consist only of ones. We define the dephase operator $\mathscr{D}$ by $\mathscr{D}(H)=D_1HD_2,$ where $D_1=\text{diag}(\bar{h}_{1,1},\bar{h}_{2,1},\ldots,\bar{h}_{n,1})$ and $D_2=\text{diag}(1,h_{1,1}\bar{h}_{1,2},\ldots,$ $h_{1,1}\bar{h}_{1,n})$. This operator turns an arbitrary matrix into a dephased one, so for every complex Hadamard matrix $H$ there exists an equivalent dephased complex Hadamard matrix $\mathscr{D}(H)$. The dephased form of a matrix is not unique, and there can be equivalent dephased matrices that are not permutation equivalent. The $(n-1) \times (n-1)$ lower right submatrix of a dephased complex Hadamard matrix is called the \textit{core}. We denote the set of dephased complex Hadamard matrices of order $n$ by $\mathbb{H}_d(n)$.

A matrix $H\in\mathbb{H}(n)$ is called \textit{isolated} if there exists a neighbourhood of $H$ that does not contain matrices that are not equivalent to $H$. We highlight the isolation of matrix $H$ with a superscript notation $H^{(0)}$.

Consider a continuous function $f:D\to \mathbb{H}_d(n)$, where $D\subseteq \mathbb{R}^k$. Additionally, we require that $k$ coincides with the rank of the Jacobian matrix of $f$ at some point in the interior of $D$. We call the set $H^{(k)}_{n}:=f(D)$ a \textit{k-parameter family of complex Hadamard matrices}. In practice, it is often more convenient to define the function from $\mathbb{T}^k$ to $\mathbb{H}(n)$.

 We say that a family of complex Hadamard matrices given by a function $f$ is \textit{stemming} from a matrix $H\in\mathbb{H}(n)$ if $f(0,\ldots,0)=H$. A family of complex Hadamard matrices is called an \textit{affine family} if it is an image of a function $f$ in the form $f(x_1,\ldots,x_k)=[e^{ig_{j,l}}]_{j,l}$, where $g_{j,l}$ is an affine map on variables $x_1,\ldots,x_k$. A family that is not affine is called a \textit{non-affine family}.

 Let $H^{(k)}_n$ and $G^{(k)}_n$ be families of complex Hadamard matrices. If for every $A\in H^{(k)}_n$ there exists $B\in G^{(k)}_n$ such that $A\cong B$ we say that $H^{(k)}_n$ is contained in $G^{(k)}_n$. If $H^{(k)}_n$ is contained in $G^{(k)}_n$ and $G^{(k)}_n$ is contained in $H^{(k)}_n$ we say that the two families are equivalent and write $H^{(k)}_n \cong G^{(k)}_n$.

For more intuitive notation, we write that $H\in H^{(k)}_n$ also in the cases where $H$ is only equivalent to a matrix in $ H^{(k)}_n$. We define a transpose of a family with $(H_n^{(k)})^T=\{H^T\;|\;H\in H^{(k)}_n\}$. We call a family $H^{(k)}$ \textit{symmetric} if $H_n^{(k)}\cong (H_n^{(k)})^T.$ 

For each $H\in\mathbb{H}(n)$ a \textit{defect}-value \cite{defOfuni}, notated by $d(H)$, can be computed. A defect value of a matrix gives an upper bound on the dimension of the smooth manifold in $\mathbb{H}(n)$ to which it belongs. In particular, $d(H)=0$ implies that $H$ is isolated.

\subsection{Palindromic polynomials}

Reciprocal and palindromic polynomials are usually defined as univariate polynomials, but we generalise the definition for multivariate polynomials as well.
\begin{definition}
\label{reciprocal}
    A \textit{conjugate reciprocal polynomial} $f^*$ of a multivariate polynomial $f\in\mathbb{C}[x_1,\ldots,x_n]$ is a polynomial $$f^*(x_1,\ldots,x_n)=\overline{f(\overline{x_1}^{-1},\ldots,\overline{x_n}^{-1})}.$$
\end{definition}

If the coefficients of the polynomial $f(x_1,\ldots,x_n)$ are real, then Definition~\ref{reciprocal} coincides with the usual definition of reciprocal polynomial $f^*(x_1,\ldots,x_n)=f(1/x_1,\ldots,1/x_n)$. If the variables are taken to be unimodular, then it coincides with the conjugate $f^*(x_1,\ldots,x_n)=\overline{f(x_1,\ldots,x_n)}$.

\begin{definition}
    A multivariate polynomial $f\in\mathbb{C}[x_1,\ldots,x_n]$ is called \textit{palindromic} if
    $$
    f=x_1^{\alpha_1}\cdots x_n^{\alpha_n}f^*
    $$
    and \textit{anti-palindromic} if
      $$
    f=-x_1^{\alpha_1}\cdots x_n^{\alpha_n}f^*,
    $$
    where $\alpha_i$ is the maximum degree of $x_i$ in $f(x_1,\ldots,x_n)$.
\end{definition}

The symmetry of palindromic and anti-palindromic polynomials can often simplify the process of solving systems involving them.

\begin{lemma}
\label{lemma_of_palindrome}
Let $p=\sum_{j=1}^l c_j\mathbf{x_j}\in\mathbb{C}[x_1,\ldots,x_k]$ be palindromic or anti-palindromic with an even number of monomials $\mathbf{x_j}$ in lexicographic order. Then the roots of $p$ with all variables nonzero correspond to solutions of
$$\sum_{j=0}^{l/2} (c_jy_j\pm \overline{c_j}y_j^{-1})=0,$$
where a positive sign corresponds to the palindromic case, a negative sign to the anti-palindromic case and $y_j=\mathbf{x_j}/(x_1^{\alpha_1/2}\cdots x_k^{\alpha_k/2})$.    
\end{lemma}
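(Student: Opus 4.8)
The plan is to turn the (anti-)palindromic symmetry into an involution on the monomials of $p$ and then factor out the common central monomial $x_1^{\alpha_1/2}\cdots x_k^{\alpha_k/2}$. First I would write $p^*$ explicitly for the given expansion: if $\mathbf{x_j}=x_1^{a_{j,1}}\cdots x_k^{a_{j,k}}$, then Definition~\ref{reciprocal} gives $p^*=\sum_{j=1}^{l}\overline{c_j}\,\mathbf{x_j}^{-1}$. Writing $\mathbf{M}=x_1^{\alpha_1}\cdots x_k^{\alpha_k}$, the palindromic identity $p=\mathbf{M}p^*$ becomes $\sum_j c_j\mathbf{x_j}=\sum_j \overline{c_j}\,\mathbf{M}/\mathbf{x_j}$, with an overall sign in the anti-palindromic case. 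Since each $\mathbf{M}/\mathbf{x_j}$ is again a monomial with nonnegative exponents (because $\alpha_i$ is the maximal degree of $x_i$ in $p$), the assignment $\mathbf{x_j}\mapsto \mathbf{M}/\mathbf{x_j}$ is an involution $\sigma$ of the monomial set, and matching coefficients of equal monomials yields $c_{\sigma(j)}=\overline{c_j}$ in the palindromic case and $c_{\sigma(j)}=-\overline{c_j}$ in the anti-palindromic case.

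Next I would introduce $y_j=\mathbf{x_j}/(x_1^{\alpha_1/2}\cdots x_k^{\alpha_k/2})$ and record the two facts that drive the computation: $\sigma$ inverts $y_j$, since $y_{\sigma(j)}=(\mathbf{M}/\mathbf{x_j})/\mathbf{M}^{1/2}=\mathbf{M}^{1/2}/\mathbf{x_j}=y_j^{-1}$; and dividing the equation $p=0$ by the nonzero quantity $\mathbf{M}^{1/2}=x_1^{\alpha_1/2}\cdots x_k^{\alpha_k/2}$ turns $\sum_j c_j\mathbf{x_j}=0$ into $\sum_j c_j y_j=0$. Grouping this last sum according to the orbits $\{j,\sigma(j)\}$ of $\sigma$, each orbit contributes $c_j y_j + c_{\sigma(j)}y_{\sigma(j)} = c_j y_j \pm \overline{c_j}y_j^{-1}$, the sign being dictated by the coefficient relation above. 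Summing over the $l/2$ orbits (after relabelling one representative per orbit) produces exactly the stated equation, and the substitution $y_j=\mathbf{x_j}/\mathbf{M}^{1/2}$ is the claimed correspondence between roots of $p$ and solutions of the reduced equation.

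The one point needing genuine care, and where the hypothesis that the number of monomials is even earns its keep, is ruling out fixed points of $\sigma$; only then is the orbit decomposition a clean partition into $l/2$ two-element blocks. A fixed point would be a monomial $\mathbf{x_j}$ with $\mathbf{x_j}^2=\mathbf{M}$, that is $\mathbf{x_j}=x_1^{\alpha_1/2}\cdots x_k^{\alpha_k/2}$; at most one monomial can satisfy this (and none can if some $\alpha_i$ is odd). On the other hand, an involution of $l$ symbols has a number of fixed points congruent to $l$ modulo $2$, so for even $l$ this number is even and hence cannot equal one. Together these force the number of fixed points to be zero, so $\sigma$ is fixed-point-free and the pairing is valid. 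I would close by noting that the construction is reversible, since any solution in the $y_j$ reassembles into a root of $p$ with all variables nonzero, so that roots of $p$ and solutions of the reduced equation are genuinely in correspondence.
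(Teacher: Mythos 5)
Your proof is correct and follows essentially the same route as the paper: the paper's entire proof is the one-line instruction to compare terms in $(x_1^{-\alpha_1/2}\cdots x_k^{-\alpha_k/2})\,p=(x_1^{\alpha_1/2}\cdots x_k^{\alpha_k/2})\,p^*$, which is precisely your division of the palindromic identity $p=\mathbf{M}p^*$ by the central monomial followed by pairing terms under the involution $\mathbf{x_j}\mapsto\mathbf{M}/\mathbf{x_j}$. Your explicit coefficient matching $c_{\sigma(j)}=\pm\overline{c_j}$ and the parity argument excluding fixed points of $\sigma$ (the one place the evenness of $l$ is genuinely used) are exactly the details the paper's terse ``comparing the terms'' leaves implicit.
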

\begin{proof}
    Comparing the terms in $$(x_1^{-\alpha_1/2}\cdots x_k^{-\alpha_k/2})\sum_{j=0}^l c_j\mathbf{x_j}=(x_1^{\alpha_1/2}\cdots x_k^{\alpha_k/2})\sum_{j=0}^l \overline{c_j}\mathbf{x_j}^{-1}$$ gives the result.
\end{proof}

Lemma~\ref{lemma_of_palindrome} also provides a method for expressing unimodular roots of palindromic polynomials with unimodular coefficients as real roots of trigonometric functions. After the change of variables $x_j \to e^{i\xi_j}$, we can employ the identities $2\cos(\xi) = e^{i\xi} + e^{-i\xi}$ and $2i\sin(\xi) = e^{i\xi} - e^{-i\xi}$. For example, finding solutions numerically can be more convenient with real numbers.

We will be interested in the unimodular roots of palindromic polynomials, and the following lemmas provide some bounds on when the obtained solution is unimodular.

\begin{lemma}
\label{pal1}
    Let $f$ be a multivariate palindromic polynomial of $n$ variables, where $\alpha_1=1$, that is,
    $$f(x_1,\ldots,x_n)=p_1(x_2,\ldots,x_n)x_1+p_0(x_2,\ldots,x_n).$$
    Fix $x_2,\ldots,x_{n}\in\mathbb{T}$. If $f(x_1,\ldots,x_n)=0$ and $p_1(x_2,\ldots,x_n)\neq 0$ then $x_1\in\mathbb{T}$.
\end{lemma}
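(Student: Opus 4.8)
The plan is to exploit the palindromic structure to show that the single root $x_1$ of the linear (in $x_1$) polynomial $f$ has modulus $1$ whenever the coefficients are evaluated at unimodular points. Since $\alpha_1=1$, solving $f=0$ for $x_1$ gives immediately $x_1=-p_0(x_2,\ldots,x_n)/p_1(x_2,\ldots,x_n)$, which is well-defined precisely because $p_1\neq 0$. So the whole task reduces to proving that $|p_0|=|p_1|$ when $x_2,\ldots,x_n\in\mathbb{T}$.

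The key is to unpack what palindromicity says about the relationship between $p_0$ and $p_1$. By definition $f=x_1^{\alpha_1}\cdots x_n^{\alpha_n}f^*$, and since $\alpha_1=1$ the factor of $x_1$ on the right is exactly $x_1^1$. Writing $f^*(x_1,\ldots,x_n)=\overline{f(\overline{x_1}^{-1},\ldots,\overline{x_n}^{-1})}$ and substituting $f=p_1 x_1+p_0$, I would compute $f^*$ explicitly. The term $p_1(\ldots)\overline{x_1}^{-1}$ conjugates to $\overline{p_1(\overline{x_2}^{-1},\ldots)}\,x_1$, and $p_0$ conjugates to $\overline{p_0(\overline{x_2}^{-1},\ldots)}$; multiplying $f^*$ by the monomial $x_1 x_2^{\alpha_2}\cdots x_n^{\alpha_n}$ and matching the coefficient of $x_1^0$ and $x_1^1$ against those of $f=p_1x_1+p_0$ should yield the two identities
$$
p_0(x_2,\ldots,x_n)=x_2^{\alpha_2}\cdots x_n^{\alpha_n}\,\overline{p_1(\overline{x_2}^{-1},\ldots,\overline{x_n}^{-1})},
$$
$$
p_1(x_2,\ldots,x_n)=x_2^{\alpha_2}\cdots x_n^{\alpha_n}\,\overline{p_0(\overline{x_2}^{-1},\ldots,\overline{x_n}^{-1})}.
$$
In other words, $p_0$ and $p_1$ are conjugate-reciprocals of each other (up to the common monomial), which is precisely the symmetry that palindromicity of the whole polynomial forces on its two $x_1$-coefficients.

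With these identities in hand, the final step is just evaluation at unimodular arguments. When $x_2,\ldots,x_n\in\mathbb{T}$ we have $\overline{x_j}^{-1}=x_j$, so the reciprocal-conjugate operation collapses to ordinary conjugation: $\overline{p_1(\overline{x_2}^{-1},\ldots)}=\overline{p_1(x_2,\ldots,x_n)}$, and likewise for $p_0$. The modulus of the monomial $x_2^{\alpha_2}\cdots x_n^{\alpha_n}$ is $1$. Hence $|p_0|=|\overline{p_1}|=|p_1|$, and therefore $|x_1|=|p_0|/|p_1|=1$, giving $x_1\in\mathbb{T}$ as claimed. (The anti-palindromic sign, had it appeared, would only introduce a factor of $-1$, which does not affect moduli, but here we are in the palindromic case.)

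I expect the only delicate point to be the bookkeeping in deriving the coefficient identities from the palindromic relation — in particular, confirming that the prefactor monomial in the definition really contributes $x_2^{\alpha_2}\cdots x_n^{\alpha_n}$ with $\alpha_1=1$ handled separately, and that the inversion $\overline{x_j}^{-1}$ inside $f^*$ is tracked correctly through each term. Everything after that is a one-line modulus computation, so the substance of the argument lives entirely in extracting the $p_0$–$p_1$ symmetry from the definition of palindromic.
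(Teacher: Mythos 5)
Your proposal is correct and follows essentially the same route as the paper: extract from the palindromic relation the coefficient identity $p_1 = x_2^{\alpha_2}\cdots x_n^{\alpha_n}\,\overline{p_0}$ at unimodular arguments, solve the linear equation for $x_1 = -p_0/p_1$, and observe the quotient has modulus $1$. The paper states the identity in one line where you derive it by explicit coefficient matching in $x_1^0$ and $x_1^1$, but this is just a more detailed writing of the identical argument.
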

\begin{proof} The palindromic property gives $p_1=\overline{p_0} \cdot x_2^{\alpha_2}\cdots x^{\alpha_n}_n$. Hence, solving $x_1$ gives
$x_1=-p_0/p_1 =-p_0/\overline{p_0} \cdot x_2^{-\alpha_2}\cdots x^{-\alpha_n}_n$ which has an absolute value of 1. 
\end{proof}

\begin{lemma}
\label{pal2}
    Let $f$ be a multivariate palindromic polynomial of $n$ variables, where $\alpha_1=2$, that is,
    $$f(x_1,\ldots,x_n)=p_2(x_2,\ldots,x_n)x^2_1+p_1(x_2,\ldots,x_n)x_1+p_0(x_2,\ldots,x_n).$$
    
    Fix $x_2,\ldots,x_{n}\in\mathbb{T}$. If $f(x_1,\ldots,x_n)=0$ and $p_2(x_2,\ldots,x_n)\neq 0$ then $x_1\in\mathbb{T}$ if and only if $|p_1|/|p_2| \leq 2$.
\end{lemma}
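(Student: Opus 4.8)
The plan is to reduce the claim to a statement about a monic quadratic with a unimodular constant term, whose coefficients inherit a self-reciprocal symmetry from the palindromic property of $f$, and then to read off unimodularity of the roots from an explicit discriminant computation.

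First I would extract the relations that the palindromic identity $f = x_1^{2}x_2^{\alpha_2}\cdots x_n^{\alpha_n} f^{*}$ imposes on the coefficients $p_0,p_1,p_2$, in the same spirit as the computation in Lemma~\ref{pal1}. Expanding $f^{*}$ and comparing the coefficients of $x_1^{2},x_1^{1},x_1^{0}$, then evaluating at the fixed $x_2,\ldots,x_n\in\mathbb{T}$ (so that $\overline{x_j}^{-1}=x_j$ and each conjugate-reciprocal coefficient reduces to $\overline{p_j}$), and writing $\mu:=x_2^{\alpha_2}\cdots x_n^{\alpha_n}\in\mathbb{T}$, I expect to obtain $p_0=\mu\,\overline{p_2}$ and $p_1=\mu\,\overline{p_1}$. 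The first relation immediately gives $|p_0|=|p_2|$.

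Next, since $p_2\neq 0$, I would divide $f=0$ by $p_2$ to get the monic quadratic $x_1^{2}+b x_1+\gamma=0$ with $b=p_1/p_2$ and $\gamma=p_0/p_2$. By the relations above $|\gamma|=1$, and a short computation using $p_1=\mu\,\overline{p_1}$ yields $b=\gamma\,\overline{b}$; that is, the quadratic is conjugate self-reciprocal. Writing $\gamma=e^{i\psi}$, the identity $b=\gamma\,\overline{b}$ forces $b=t\,e^{i\psi/2}$ for some real $t$ with $|t|=|b|=|p_1|/|p_2|$. The discriminant then factors as $b^{2}-4\gamma=(t^{2}-4)e^{i\psi}$, and the two roots become $x_1=e^{i\psi/2}\bigl(-t\pm\sqrt{t^{2}-4}\bigr)/2$. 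When $|t|\le 2$ the discriminant is nonpositive, $\sqrt{t^{2}-4}=i\sqrt{4-t^{2}}$, and $\lvert -t\pm i\sqrt{4-t^{2}}\rvert=2$, so both roots lie on $\mathbb{T}$; when $|t|>2$ the bracketed factors are real, distinct from $\pm 1$, with product $1$, hence reciprocal, placing one root strictly inside and the other strictly outside the unit circle. Since the two roots are unimodular or not simultaneously, the root selected by $f=0$ lies on $\mathbb{T}$ precisely when $|p_1|/|p_2|\le 2$.

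The main obstacle I anticipate is the bookkeeping in the first step: correctly transferring the multivariate palindromic identity into the coefficient relations on the unimodular torus, keeping track of the monomial $\mu$ and of the conjugate-reciprocal operation on the $p_j$. Once the relation $b=\gamma\,\overline{b}$ is established, the discriminant computation and the two-case analysis are routine, and the only care needed is the choice of square-root branch, which is harmless because both roots are treated symmetrically.
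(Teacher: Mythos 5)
Your proposal is correct and follows essentially the same route as the paper's proof: both extract the coefficient relations $p_0=\mu\,\overline{p_2}$, $p_1=\mu\,\overline{p_1}$ (with $\mu=x_2^{\alpha_2}\cdots x_n^{\alpha_n}\in\mathbb{T}$) from the palindromic identity on the torus, normalize the quadratic by removing the common phase, and decide unimodularity from the sign of the real discriminant $t^2-4$ with $|t|=|p_1|/|p_2|$ --- the paper's substitution reducing to $y^2+\frac{|p_1|}{|p_2|}y+1=0$ is the same step as your factoring out $e^{i\psi/2}$. A minor advantage of your formulation via $b=\gamma\,\overline{b}$ is that it covers the case $p_1=0$ without reference to $\arg(p_1)$, which the paper's proof uses implicitly.
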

\begin{proof}
    The palindromic property gives $p_2=\overline{p_0} \cdot x_2^{\alpha_2}\cdots x^{\alpha_n}_n$ and $p_1=\overline{p_1} \cdot x_2^{\alpha_2}\cdots x^{\alpha_n}_n.$ Notice that $x_2^{\alpha_2}\cdots x^{\alpha_n}_n=e^{2\arg(p_1)i}.$ We can solve $x$ from
    \begin{equation}
    \label{e1}
        |p_2|e^{\arg(p_2)i}x_1^2+|p_1|e^{\arg(p_1)i}x_1+|p_2|e^{(2\arg(p_1)-\arg(p_2))i}=0.
    \end{equation}
    Division with the constant term and a variable change $x_1=ye^{(\arg(p_2)-\arg(p_1))}$ gives
    \begin{equation}
    \label{e2}
        y^2+\frac{|p_1|}{|p_2|}y+1=0.
    \end{equation}
    Unimodular solutions of $y$ are exactly the unimodular solutions of $x$. We get that
    \begin{equation*}
    \label{ypal}
        y=\frac{1}{2}\left(-|p_1|/|p_2|\pm i\sqrt{4-|p_1|^2/|p_2|^2}\right),
    \end{equation*}
    
    which is unimodular if and only if $|p_1|/|p_2| \leq 2$.
\end{proof}

Notice that the discriminant of \eqref{e1} is the discriminant of \eqref{e2} scaled with $x_2^{\alpha_2}\cdots x^{\alpha_n}_n|p_2|$. In practice it can be more convenient to consider the inequality $C/(x_2^{\alpha_2}\cdots x^{\alpha_n}_n)\leq 0$ instead of $|p_1|/|p_2| \leq 2$, where $C$ is the discriminant of \eqref{e1}. 
Moreover, Lemma~\ref{pal1} and Lemma~\ref{pal2} can be shown to work for anti-palindromic polynomials as well with almost identical proofs.

\section{Known complex Hadamard matrices of order 8}
\label{sec_4_prev_mat}
\subsection{Affine families}
In the previous literature, the following affine families of order 8 have appeared
\begin{equation}
\label{affine_list}
    F^{(5)}_{8}, D^{(5)}_{8A},\; D^{(5)}_{8B},\; S_{8A}^{(4)},\; S^{(4)}_{8B}. 
\end{equation}
The family $F_8^{(5)}:=F_8(\mathbb{T}^5)$, introduced in  \cite{cgthm}, is 5-parameter family stemming from the Fourier matrix where $F_{8}(a,b,c,d,e)=$
$$
\begin{bmatrix}
1&  1&  1&  1&  1&  1&  1&  1\\
1&  \omega&\omega^2&\omega^3&\omega^4&\omega^5&\omega^6&\omega^7\\
1&\omega^2&\omega^4&\omega^6&  1&\omega^2&\omega^4&\omega^6\\
1&\omega^3&\omega^6&  w&\omega^4&\omega^7&\omega^2&\omega^5\\
1&\omega^4&  1&\omega^4&  1&\omega^4&  1&\omega^4\\
1&\omega^5&\omega^2&\omega^7&\omega^4&  w&\omega^6&\omega^3\\
1&\omega^6&\omega^4&\omega^2&  1&\omega^6&\omega^4&\omega^2\\
1&\omega^7&\omega^6&\omega^5&\omega^4&\omega^3&\omega^2&  \omega
\end{bmatrix} \circ
\begin{bmatrix}
1&1&1&    1&1&1&1&    1\\
1&      a&      b&          c&1&      a&      b&          c\\
1&      d&1&          d&1&      d&1&          d\\
1&      e&      b& \overline{a}ce &1&      e&      b& \overline{a}ce  \\
1&1&1&    1&1&1&1&    1\\
1&      a&      b&          c&1&      a&      b&          c\\
1&      d&1&          d&1&      d&1&          d\\
1&      e&      b& \overline{a}ce &1&      e&      b& \overline{a}ce  
\end{bmatrix},
$$
$\omega=\exp(2\pi i/8)$ and $\circ$ is the Hadamard product. 

The family $D_{8A}^{(5)}:=D_{8A}(\mathbb{T}^5)$, introduced in \cite{DitaDouble},
where $D_{8A}(a,b,c,d,e)=$
$$
\begin{bmatrix}
 1 & 1 & 1 & 1 & 1 & 1 & 1 & 1 \\
 1 & \omega & \omega^2 & \omega^3 & \omega^4 & \omega^5 & \omega^6 & \omega^7 \\
 1 & \omega^2 & \omega^6 & \omega^4 & 1 & \omega^2 & \omega^6 & \omega^4 \\
 1 & \omega^3 & \omega^6 & \omega & \omega^4 & \omega^7 & \omega^2 & \omega^5\\
 1 & \omega^4 & \omega^4 & 1 & 1 & \omega^4 & \omega^4 & 1 \\
 1 & \omega^5 & \omega^2 & \omega^7 & \omega^4 & \omega & \omega^6 & \omega^3 \\
  1 & \omega^6 & \omega^2 & \omega^4 & 1 & \omega^6 & \omega^2 & \omega^4 \\
  1 & \omega^7 & \omega^6 & \omega^5 & \omega^4 & \omega^3 & \omega^2 & \omega ,
\end{bmatrix} \circ
\begin{bmatrix}
1&1&1&    1&1&1&1&    1\\
1&      a&      b&          c&1&      a&      b&          c\\
1&      d&d&          1&1&      d&d&          1\\
1&      e&      b& \overline{a}ce &1&      e&      b& \overline{a}ce  \\
1&1&1&    1&1&1&1&    1\\
1&      a&      b&          c&1&      a&      b&          c\\
1&      d&d&          1&1&      d&d&          1\\
1&      e&      b& \overline{a}ce &1&      e&      b& \overline{a}ce  
\end{bmatrix}
$$
has a very similar structure to $F^{(5)}_8$, but unlike $F^{(5)}_8$, it is not symmetric. The family $D_{8B}^{(5)}$ is the transpose of the family $D_{8A}^{(5)}$.

Finally, the 4-parameter family $S_{8A}^{(4)}:=S_{8A}(\mathbb{T}^4)$, introduced in \cite{S8mat}, is given by $S_{8A}(a,b,c,d)=$
    $$
\begin{bmatrix}
       1&  1&  1&  1&  1&  1&  1&  1\\
       1&  1& -1& -1& -1&  i& -i&  1\\
       1&  i&  i& -i&  1& -1& -1& -i\\
       1&  i& -i&  i& -1& -i&  i& -i\\
       1& -1& -i&  i&  1&  i& -i& -1\\
       1& -1&  i& -i& -1&  1&  1& -1\\
       1& -i& -1& -1&  1& -i&  i&  i\\
       1& -i&  1&  1& -1& -1& -1&  i\\
\end{bmatrix}
\circ
\begin{bmatrix}
            1&       1&       1&       1& 1&       1&       1&       1\\
            1&       d&       d&       d& 1&  c d &  c d &       d\\
            1&  a \overline{d} &  b \overline{d} &  b \overline{d} & 1&       1&       1&  a \overline{d} \\
            1&       a&       b&       b& 1&  c d &  c d &       a\\
            1&       1&  b \overline{d} &  b \overline{d} & 1&       c&       c&       1\\
            1&       d&       b&       b& 1&       d&       d&       d\\
            1&  a \overline{d} &       1&       1& 1&       c&       c&  a \overline{d} \\
            1&       a&       d&       d& 1&       d&       d&       a
\end{bmatrix}
$$ and its transpose is denoted by $S_{8B}^{(4)}\cong(S_{8A}^{(4)})^T.$
The family $S_{8A}^{(4)}$ is often referred to without the subscript $A$ and its transpose is sometimes called $D^{(4)}_8$. 

As far as we know, the pairwise relationships among these families have not all been explored. At least families $D_{8A}^{(5)}$ and $D_{8B}^{(5)}$ are missing from some of the earlier listings of affine families of order 8.

\subsection{Isolated matrices}

Four isolated complex Hadamard matrices of order 8 are known, namely
\begin{equation}
\label{isolated_list}
   A_{8A}^{(0)},\;A^{(0)}_{8B},\; V_{8A}^{(0)},\;V_{8B}^{(0)}.
\end{equation}

The matrix 
$$
A_{8A}^{(0)}=
\begin{bmatrix}
 a & 1 & -a & 1 & -a & -a & a
   & 1 \\
 1 & -a & 1 & a & -a & a & -a
   & 1 \\
 -a & 1 & -a & a & 1 & a & 1 &
   -a \\
 1 & a & a & 1 & 1 & -a & -a &
   -a \\
 -a & -a & 1 & 1 & a & -a & 1
   & a \\
 -a & a & a & -a & -a & 1 & 1
   & 1 \\
 a & -a & 1 & -a & 1 & 1 & a &
   -a \\
 1 & 1 & -a & -a & a & 1 & -a
   & a \\
\end{bmatrix},
$$ where $a$ is the root of the polynomial $3x^2-2x+3$, is introduced in \cite{bruzda8}. The matrix $A^{(0)}_{8A}$ is symmetric but not equivalent to its conjugate, which is denoted by $A_{8B}^{(0)}$.

Consider a matrix-valued function 
$$V_8(a,b,c)=
\begin{bmatrix}
 -a b & -a b & b c & b c & 1 & 1 & a c & a c \\
 -a b & b c & -a b & 1 & b c & a c & 1 & -a c \\
 b c & -a b & 1 & -a b & a c & b c & -a c & 1 \\
 b c & 1 & -a b & a c & -a b & -a c & b c & -1 \\
 1 & b c & a c & -a b & -a c & -a b & -1 & b c \\
 1 & a c & b c & -a c & -a b & -1 & -a b & -b c \\
 a c & 1 & -a c & b c & -1 & -a b & -b c & -a b \\
 a c & -a c & 1 & -1 & b c & -b c & -a b & a b \\
 \end{bmatrix}.$$

 The matrices $V_{8A}^{(0)}$ and $V_{8B}^{(0)}$ are isolated matrices, presented in \cite{Catalogue} and analytically solved later in \cite{bruzdaSH}. These matrices are found by solving the system of polynomial equations arising from orthogonality constrains of the matrix $V_8(\frac{\sqrt{a}}{\sqrt{b} \sqrt{c}},\frac{\sqrt{b}}{\sqrt{a} \sqrt{c}}, \frac{\sqrt{a} \sqrt{b}}{\sqrt{c}})$. In \cite{bruzdaSH}, analytic solutions for these equations are obtained from roots of quartic polynomials. We present an alternative solution which leads to a more compact way of expressing the solutions with trigonometric functions. In some previous literature, all the solutions have been treated as different matrices, even though there are only two that are inequivalent.
 
\begin{lemma}
\label{Vlemma}
    $V_8(a,b,c)$ is a complex Hadamard matrix if and only if
\begin{equation*}
\begin{cases}
    & \Re(a)\Re(c)-\Im(a)\Im(b) = 0\\
    & \Re(a)\Re(b) - \Im(a)\Im(c) + \Re(b)\Re(c) = 0\\
    & \Re(a)\Re(b) + \Im(a)\Im(c) + \Im(b)\Im(c) = 0
\end{cases}
\end{equation*}
where $a,b,c\in\mathbb{T}$.
\end{lemma}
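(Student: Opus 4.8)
The plan is to compute the product $V_8V_8^\dagger$ directly and read off the conditions under which every off-diagonal entry vanishes. Since $a,b,c\in\mathbb{T}$, every entry of $V_8(a,b,c)$ lies in $\{\pm 1,\pm ab,\pm bc,\pm ac\}\subseteq\mathbb{T}$, so $V_8(a,b,c)\in M_8(\mathbb{T})$ is automatic and the diagonal entries of $V_8V_8^\dagger$ are all equal to $8$. Thus $V_8(a,b,c)\in\mathbb{H}(8)$ if and only if the $\binom{8}{2}=28$ Hermitian inner products between distinct rows all vanish. Writing $x=ab$, $y=bc$, $z=ac$ and using $|a|=|b|=|c|=1$, each of the eight summands of such an inner product is one entry times the conjugate of another, and therefore collapses to $\pm 1$ or to $\pm$ one of the monomials $ab,bc,ac$ and $a\bar c,\ \bar a b,\ b\bar c$ together with their conjugates; for instance $x\overline{y}=ab\,\overline{bc}=a\bar c$.

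Next I would collect terms in each off-diagonal inner product. Because $\mathbb{H}(8)$ is closed under conjugation, each such inner product is a Hermitian (equivalently, palindromic after clearing a monomial) Laurent polynomial in $a,b,c$, and so can be written purely in terms of real quantities by means of the identities $w+\bar w=2\Re(w)$ and $w-\bar w=2i\Im(w)$ for $w\in\mathbb{T}$, exactly as in Lemma~\ref{lemma_of_palindrome}. Expanding $\Re(ab)=\Re(a)\Re(b)-\Im(a)\Im(b)$, $\Re(a\bar c)=\Re(a)\Re(c)+\Im(a)\Im(c)$, and the analogous expressions for the other monomials, every off-diagonal entry becomes a real linear combination of the pairwise products $\Re(a)\Re(b)$, $\Im(a)\Im(b)$, $\Re(a)\Re(c)$, and so on. I expect each vanishing condition to reduce to a real scalar multiple of one of the three displayed polynomials, and that all three occur among the $28$.

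The final step is to verify that the real span of all $28$ resulting bilinear forms coincides with the span of the three listed forms; this yields both necessity (orthogonality forces the three equations) and sufficiency (the three equations force all $28$ inner products to vanish), establishing the ``if and only if''. The main obstacle is purely the bookkeeping: organising $28$ inner products of eight terms each and checking that no fourth independent condition survives. I would cut this labour substantially by exploiting the evident symmetries of $V_8$ --- the rows fall into related groups under permutations and sign changes --- so that many of the $28$ conditions coincide or are conjugates of one another, leaving only a handful of genuinely distinct equations to simplify down to the three stated ones.
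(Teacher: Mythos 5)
Your proposal is correct and takes essentially the same route as the paper: there, the 28 orthogonality constraints are first reduced to three polynomial equations $p_V^{(1)},p_V^{(2)},p_V^{(3)}$, the palindromic combinations $p_V^{(1)}$ and $p_V^{(2)}\pm p_V^{(3)}$ are formed, and Lemma~\ref{lemma_of_palindrome} together with unimodularity (your identities $w+\bar w=2\Re(w)$, $w-\bar w=2i\Im(w)$) converts and factors each into one of the three stated real bilinear forms. One small imprecision: an individual inner product need not be Hermitian as a Laurent polynomial --- in general only conjugate pairs of inner products yield palindromic/anti-palindromic combinations, which is exactly why the paper works with $p_V^{(2)}+p_V^{(3)}$ and $p_V^{(2)}-p_V^{(3)}$ --- but your real/imaginary-part splitting handles this correctly in effect.
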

\begin{proof}
    Orthogonality requirements between the rows of $V_8(a,b,c)$ give us a system of polynomial equations $p_V^{(1)}=0$, $p_V^{(2)}=0$ and $p^{(3)}_V=0$, where
    \begin{align*}
        &p_V^{(1)}=b + a^2 b + c - a^2 c - b^2 c + a^2 b^2 c + b c^2 + a^2 b c^2,\\
        &p_V^{(2)}=a + b - a^2 b - b c^2 + a^2 b c^2 + a b^2 c^2,\\
        &p^{(3)}_V=a b^2 + c + a^2 c + b^2 c + a^2 b^2 c + a c^2.
    \end{align*}

We can equivalently consider a system $p^{(1)}_V=0$, $p^{(2)}_V+p_V^{(3)}=0$ and $p^{(2)}_V-p_V^{(3)}=0$. Notice that $p^{(1)}_V$, $p^{(2)}_V+p_V^{(3)}$ and $p^{(2)}_V-p_V^{(3)}$ are palindromic. Consider the constraint $p^{(2)}_V+p_V^{(3)}=0$. Lemma~\ref{lemma_of_palindrome} gives an equivalent constraint
$$a b+\frac{1}{a b}+\frac{a}{b}+\frac{b}{a}+a c+\frac{1}{a c}-\frac{a}{c}-\frac{c}{a}+b c+\frac{1}{b c}+\frac{c}{b}+\frac{b}{c}=0,$$
which factors to 
$$\left(a+\frac{1}{a}\right) \left(b+\frac{1}{b}\right)+\left(a-\frac{1}{a}\right) \left(c-\frac{1}{c}\right)+\left(b+\frac{1}{b}\right) \left(c+\frac{1}{c}\right)=0$$
and by unimodularity of $a,b,c$ we have
$$\Re(a)\Re(b) - \Im(a)\Im(c) + \Re(b)\Re(c) = 0.$$
Other constraints can be dealt with in a similar manner.
\end{proof}

\begin{proposition}
Let
{\small
\begin{alignat*}{2}
    &x_1=\sqrt{\frac{1}{2} \sin \left(\frac{\pi }{16}\right) \sec \left(\frac{3 \pi }{16}\right)},\quad
    && x_2=\sqrt{\cos \left(\frac{\pi }{16}\right) \cos \left(\frac{3 \pi }{16}\right) \sec \left(\frac{\pi }{8}\right)},
\end{alignat*}
}
\vspace{-0.2cm}
{\small
\begin{align*}
&y_1=2 \cos \left(\frac{\pi }{16}\right) \sqrt{\sin \left(\frac{\pi }{16}\right) \sec \left(\frac{3 \pi }{16}\right)},\\
& y_2= -\sin \left(\frac{\pi }{16}\right)\sqrt{2 \cos \left(\frac{\pi }{16}\right) \sec \left(\frac{\pi }{8}\right) \sec \left(\frac{3 \pi }{16}\right)} ,\\
&z_1=-2 \cos \left(\frac{\pi }{16}\right) \sec \left(\frac{\pi }{8}\right) \sqrt{\sin \left(\frac{\pi }{16}\right) \cos \left(\frac{3 \pi }{16}\right)},\\
& z_2=-\frac{1}{2} \sec\left(\frac{\pi }{8}\right)\sqrt{\cos \left(\frac{3 \pi }{16}\right) \sec \left(\frac{\pi }{16}\right) \sec\left(\frac{\pi }{8}\right)}.
\end{align*}
}
$V_8(x_1+x_2i,\;y_1+y_2i,\;z_1+z_2i)$ is a complex Hadamard matrix.
\end{proposition}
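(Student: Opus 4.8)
The plan is to apply the criterion of Lemma~\ref{Vlemma} and then verify the resulting conditions by direct trigonometric computation. Write $a=x_1+x_2i$, $b=y_1+y_2i$ and $c=z_1+z_2i$, so that $\Re(a)=x_1$, $\Im(a)=x_2$, $\Re(b)=y_1$, $\Im(b)=y_2$, $\Re(c)=z_1$ and $\Im(c)=z_2$. All three angles $\pi/16$, $3\pi/16$ and $\pi/8$ lie in $(0,\pi/2)$, so every sine, cosine and secant occurring in the definitions is strictly positive; hence each radicand is positive, the square roots denote genuine positive reals, and the signs are exactly those displayed. By Lemma~\ref{Vlemma} it therefore suffices to check two things: first, that $a,b,c\in\mathbb{T}$, i.e.\ $x_1^2+x_2^2=y_1^2+y_2^2=z_1^2+z_2^2=1$; and second, that the three equations $x_1z_1-x_2y_2=0$, $x_1y_1-x_2z_2+y_1z_1=0$ and $x_1y_1+x_2z_2+y_2z_2=0$ hold.

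I would treat the orthogonality equations first, as they are the most transparent. The key mechanism is a cancellation: when two of the radical expressions are multiplied, factors of the form $\sec\theta\cos\theta=1$ collapse and each product becomes a plain monomial in sines and cosines. For example one finds $x_1z_1=-\sqrt{2}\,\sin(\pi/16)\cos(\pi/16)\sec(\pi/8)=x_2y_2$, so the first equation holds; note that this equality is purely structural, a consequence of the shape of the radicals and independent of the particular angle. For the remaining two equations the same collapse gives, after simplification, monomials such as $x_1y_1=\sqrt{2}\,\sin(\pi/16)\cos(\pi/16)\sec(3\pi/16)$, $y_1z_1=-4\sin(\pi/16)\cos^2(\pi/16)\sec(\pi/8)$, $x_2z_2=-\tfrac12\cos(3\pi/16)\sec^2(\pi/8)$ and $y_2z_2=\tfrac{1}{\sqrt{2}}\sin(\pi/16)\sec^2(\pi/8)$, and one must verify that the two corresponding signed sums vanish.

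For the unimodularity I would square and add, reducing each statement to an identity such as $\tfrac12\sin(\pi/16)\sec(3\pi/16)+\cos(\pi/16)\cos(3\pi/16)\sec(\pi/8)=1$, with the analogous sums for $b$ and $c$. Unlike the first orthogonality equation, these are genuine numerical identities valid only at the stated angles: written with a generic angle in place of $\pi/16$ they fail. To prove them one clears the secants, uses $2\sin(\pi/16)\cos(\pi/16)=\sin(\pi/8)$ together with the double- and triple-angle formulas to express everything through the single base angle $\pi/16$, and then reduces to a polynomial identity in $\cos(\pi/16)$ that can be verified from the minimal polynomial of $\cos(\pi/16)$ over $\mathbb{Q}$ (equivalently, substitute the nested-radical closed forms obtained by iterating the half-angle formula from $\cos(\pi/4)=\tfrac{\sqrt{2}}{2}$ and simplify).

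The main obstacle is thus the trigonometric bookkeeping in the two three-term orthogonality equations and the three unimodularity identities: these are not monomial equalities but cancellations among several terms carrying different powers of $\sec(\pi/8)$ and $\sec(3\pi/16)$, and they genuinely require the value $\pi/16$ rather than following formally. I would organise the computation by clearing denominators (multiplying through by $\cos(\pi/8)$ and $\cos(3\pi/16)$ to the appropriate powers) and reducing each to a polynomial identity in $\cos(\pi/16)$. A numerical evaluation at the given values confirms all six equalities to high precision, which both validates the proposition and guides the otherwise routine, if tedious, symbolic simplification.
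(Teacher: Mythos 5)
Your proposal is correct and follows exactly the paper's route: the paper's proof is the one-line observation that the given assignment solves the system of Lemma~\ref{Vlemma}, and you carry out precisely that verification (unimodularity of $a,b,c$ plus the three real equations), with correctly computed monomial collapses such as $x_1z_1=x_2y_2=-\sqrt{2}\sin(\pi/16)\cos(\pi/16)\sec(\pi/8)$. You merely supply the trigonometric bookkeeping that the paper leaves implicit, so there is nothing to flag.
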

\begin{proof}
    The given assignment is a solution for the system of Lemma~\ref{Vlemma}. 
\end{proof}

We define $V_{8B}^{(0)}:=V_8(x_1+x_2i,\;y_1+y_2i,\;z_1+z_2i)$ and $V_{8A}^{(0)}:= (V_{8B}^{(0)})^\dagger$. Let $P_1=(e_4\;e_3\; e_7\;e_8\;e_1\;e_2\;e_6\;e_5)$ and $P_2=(e_1\;e_7\;e_6\;e_4\;e_5\;e_3\;e_2\;e_8)$. The matrices $\mathscr{D}(V_{8A}^{(0)})$ and $\mathscr{D}(P_1V_{8B}^{(0)}P_2)$ matches exactly the numerical matrices of the same name given in \cite{Catalogue}.

All the 16 solutions to the system of Lemma~\ref{Vlemma} that do not produce Butson matrices have the form 
$$(a,b,c)=(\mu_1\cdot x_1+\mu_2 \cdot x_2i,\;\mu_3 \cdot y_1+\mu_2 \cdot y_2i,\;\mu_1 \cdot z_1+\mu_1\mu_2\mu_3 \cdot z_2i)$$ 
or 
$$(a,b,c)=(\mu_1 \cdot x_2+\mu_2 \cdot x_1i,\;\mu_3 \cdot z_2+\mu_2 \cdot z_1i,\;\mu_1\cdot y_2+\mu_1\mu_2\mu_3\cdot y_1i),$$
where $\mu_1,\mu_2,\mu_3\in \{-1,1\}$. These can all be shown to produce matrices that are equivalent to either $V_{8B}^{(0)}$ or $V_{8A}^{(0)}$. The numerical values for the different solutions given in \cite{Catalogue} are a perfect match to the approximate values of $(\frac{c}{b}, \frac{c}{a}, \frac{1}{ab})$, where $(a,b,c)$ is one of the solutions above.

\subsection{Non-affine families}
The existing literature also knows two non-affine families:
\begin{equation}
    \label{non-affine_list}
    T^{(1)}_8, \;T_{8B}^{(3)}.
\end{equation}
The first is a 1-parameter family introduced in \cite{bruzda8}. The family is given by function $T_8(x,y,z,u)=$
{\small
$$
\begin{bmatrix}
    1 & 1 & 1 & 1 & 1 & 1            & 1            & 1            \\
1 & 1 & 1 & 1 & 1 & \zeta^3    & \zeta^{15}  & \zeta^{18}  \\
1 & \zeta^8 & \zeta^8 & \zeta^8 & \zeta^8 & \zeta^{13} & \zeta^{15} & \zeta^8 \\
1 & \zeta^2 & \zeta^2 & \zeta^2 & \zeta^2 & \zeta^{17} & \zeta^5  & \zeta^2 \\
1 & 1 & 1 & 1 & 1 & \zeta^7    & \zeta^5    & \zeta^{12} \\
1 & 1 & 1 & 1 & 1 & 1           & \zeta^{10} & \zeta^{10} \\
1 & 1 & 1 & 1 & 1 & \zeta^{10} & 1           & \zeta^{10} \\
1 & 1 & 1 & 1 & 1 & \zeta^{10} & \zeta^{10} & 1
\end{bmatrix}\circ
\begin{bmatrix}
    1 & 1 & 1 & 1 & 1 & 1 & 1 & 1\\
1 & x & y & z & -\frac{y z}{x} & 1 & 1 & 1 \\
1 & -\frac{u}{y} & \frac{u}{x} & -\frac{u z}{x y} & -\frac{u z}{x^2} & 1 & 1 & 1\\ 
1 & y & x & -\frac{y z}{x} & z & 1 & 1 & 1\\ 
1 & \frac{u}{x} & -\frac{u}{y} & -\frac{u z}{x^2} & -\frac{u z}{ x y} & 1 & 1 & 1\\ 
1 & -i\frac{x}{z} & i\frac{x}{z} & -i\frac{z}{x} & i\frac{z}{x} & 1 & 1 & 1\\ 
1 & u & -u & -\frac{u z^2}{x^2} & \frac{u z^2}{x^2} & 1 & 1 & 1\\ 
1 & i\frac{u z}{x} & i\frac{u z}{x} & -i\frac{u z}{x} & -i\frac{u z}{x} & 1 & 1 & 1 \\ 
\end{bmatrix},
$$
}

 where $\zeta = e^{2\pi i/20}$. The domain where the function $T(x,y,z,u)$ gives Hadamard matrices can be expressed solely in terms of $x$. The relations of the other variables and $x$ are complicated and thus not presented here. The parameter $x$ is not free on $\mathbb{T}$, and the exact bounds that guarantee the unimodularity of the other variables are not known. We will show in Section \ref{results} that this is most likely a subfamily of a new family we have discovered.

The non-affine 3-parameter family $T^{(3)}_{8B}$ is introduced in \cite{bruzdaSH} and given by the function $T_{8B}(a,b,c,d)=$
\begin{equation*}
\label{TB}
\begin{bmatrix}
 1 & 1 & 1 & 1 & 1 & 1 & 1 & 1 \\
 1 & -1 & -d & -c & d & c & c d & -c d \\
 1 & -a & -1 & c & a & -a c & -c & a c \\
 1 & a & d & -1 & a d & -a & -d & -a d \\
 1 & b & -b & -b c & -1 & -c & c & b c \\
 1 & -b & b d & b & -d & -1 & d & -b d \\
 1 & -a b & b & -b & -a & a & -1 & a b \\
 1 & a b & -b d & b c & -a d & a c & -c d & -a b c d \\
\end{bmatrix},
\end{equation*}
with some additional constraints for $d$.
As the family is discussed only briefly in \cite{bruzdaSH}, we formulate the following more detailed result.

\begin{proposition}
    Let $T_{8B}^{(3)}:=\mathcal{S}_{B}(\mathbb{T}^3\backslash D_0)$, where
    $$\mathcal{S}_B(a,b,c)=T_{8B}(a,b,c,d')$$
    with
    $$d'=\frac{1 + a b + a c + bc}{a + b + c + a b c }$$
    and $D_0=\{(a,b,c)\in \mathbb{T}^3\;|\; -1,1\in\{a,b,c\}\}$. $T_{8B}^{(3)}$ is a symmetric 3-parameter family of complex Hadamard matrices. Additionally, for every $H\in T_{8B}(\mathbb{T}^4)\cap\mathbb{H}(8)$ there exists an equivalent matrix in $T^{(3)}_{8B}.$
\end{proposition}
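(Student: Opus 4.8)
The plan is to reduce the Hadamard condition to one scalar identity and then read everything off from it. Write $N=1+ab+ac+bc$, $D=a+b+c+abc$, and set $f=Dd-N$. First I would form all $\binom{8}{2}$ Hermitian inner products of the rows $r_1,\dots,r_8$ of $T_{8B}(a,b,c,d)$, using that for unimodular entries the conjugate equals the inverse. Direct expansion shows that every inner product either vanishes identically on $\mathbb{T}^4$ or equals a monomial multiple of $f$; for example the product of the first and last rows comes out to $f/(abcd)$, while the rows free of $d$ (rows $3,5,7$) are mutually orthogonal unconditionally. This localises all genuine constraints to pairs meeting a $d$-bearing row and explains why they collapse to the single polynomial $f$, so that $T_{8B}(a,b,c,d)\in\mathbb{H}(8)$ if and only if $f(a,b,c,d)=0$. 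Carrying out this bookkeeping for all pairs is the routine, if lengthy, computational core.

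\textbf{Unimodularity, the excluded locus, and the parameter count.}
Next I would feed $f$ into the palindromic machinery. One checks that $abcd\,f^{*}=-f$, so $f$ is anti-palindromic in $(a,b,c,d)$ with every maximal degree equal to $1$. Viewing $f=Dd-N$ as linear in $d$, the anti-palindromic form of Lemma~\ref{pal1} shows that whenever $D\neq0$ the unique root $d=N/D=d'$ lies in $\mathbb{T}$, which is exactly unimodularity of $d'$. To match the stated domain, observe that $N$ and $D$ are symmetric in $a,b,c$ and that setting $a=1$ factors both as $(1+b)(1+c)$; combined with the analogous factorisation at $a=-1$ and a short argument at unimodular $b,c$ (using $|1+bc|=|b+c|$ iff $b$ or $c$ equals $\pm1$), this gives, for $a,b,c\in\mathbb{T}$, the clean statement $D=0\iff N=0\iff$ both $1$ and $-1$ occur among $a,b,c$, i.e.\ exactly on $D_0$. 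Hence on $\mathbb{T}^3\setminus D_0$ the value $d'$ is a well-defined unimodular number and $\mathcal{S}_B(a,b,c)=T_{8B}(a,b,c,d')$ satisfies $f=0$, so it is a complex Hadamard matrix. Continuity is immediate since $d'$ is rational with nonvanishing denominator on the domain, and evaluating the Jacobian at one generic point—where $a,b,c$ enter through independent entries—exhibits a nonzero $3\times3$ minor, establishing a genuine $3$-parameter family.

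\textbf{Symmetry, completeness, and the main obstacle.}
For symmetry I would exhibit explicit permutation matrices $P_1,P_2$ and a relabelling $\sigma$ of $(a,b,c)$ for which $P_1\,\mathcal{S}_B(a,b,c)^{T}P_2$ dephases to $\mathcal{S}_B(\sigma(a,b,c))$; since $d'$ is a symmetric function of $(a,b,c)$, it suffices to match the $\pm$monomial pattern of the transpose against that of a relabelled member, a finite verification. For completeness, if $H=T_{8B}(a,b,c,d)\in\mathbb{H}(8)$ then $f=0$; when $(a,b,c)\notin D_0$ this forces $d=d'$ and $H=\mathcal{S}_B(a,b,c)\in T^{(3)}_{8B}$ directly. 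The delicate case is $(a,b,c)\in D_0$, where $N=D=0$ makes $f\equiv0$, so that \emph{every} $d\in\mathbb{T}$ yields a Hadamard matrix; I expect this to be the main obstacle. The approach I would take is to show each such boundary matrix is equivalent to an honest family member, either by solving explicitly for parameters $(a',b',c')\notin D_0$ together with permutations carrying $T_{8B}(a,b,c,d)$ to $\mathcal{S}_B(a',b',c')$, or via the observation that as $(a,b,c)\to D_0$ the ratio $d'=N/D$ can be steered to any prescribed unimodular limit, placing the $D_0$-matrices in the closure of the family. Upgrading this closure statement to an equivalence with a single member—rather than a mere limit—is the step requiring genuine care, and is where I would lean on the exhaustive permutation search underlying Section~\ref{ineq_sec}.
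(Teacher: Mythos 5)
Your overall route is the paper's own: reduce every orthogonality constraint to $0$ or to the single polynomial $p_B=1+ab+ac+bc-d(a+b+c+abc)$, solve linearly for $d$, get unimodularity of $d'$ away from the zero locus of the denominator via the palindromic machinery, and handle symmetry and the degenerate locus by explicit equivalences. In two places you are in fact more careful than the paper: you verify $abcd\,f^*=-f$, so the constraint is \emph{anti}-palindromic (the paper labels $p_B$ palindromic; one genuinely needs the anti-palindromic extension of Lemma~\ref{pal1} mentioned after Lemma~\ref{pal2}), and you actually prove the assertion, stated without argument in the paper, that on $\mathbb{T}^3$ the numerator and denominator vanish exactly on $D_0$ --- your identity $|1+bc|=|b+c|$ iff $b$ or $c$ lies in $\{\pm1\}$, combined with the factorisations $(1\pm b)(1\pm c)$ at $a=\pm1$, is correct and closes that step. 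Your sample computations (row $1$ against row $8$ giving $f/(abcd)$; rows $3,5,7$ unconditionally orthogonal) also check out.

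Two genuine soft spots remain. First, your symmetry ansatz is too narrow: you search for a relabelling $\sigma$ of $(a,b,c)$ plus permutations, but transposition moves $d$ into positions occupied by $a$ in the core pattern (e.g.\ $(T_{8B})_{2,3}=-d$ lands where $(T_{8B})_{3,2}=-a$ stood), so no pure relabelling of $(a,b,c)$ can match the transpose. The paper's verified identity is $\mathcal{S}_B(-d',c,a)=P_1\mathcal{S}_B^TP_2(a,b,c)$ with $P_1=(e_1\;e_2\;e_4\;e_6\;e_3\;e_5\;e_7\;e_8)$ and $P_2=(e_1\;e_2\;e_5\;e_3\;e_6\;e_4\;e_7\;e_8)$: the first argument is the \emph{derived} quantity $-d'$, so your finite verification as described would come up empty unless you enlarge the substitution class to expressions in $d'$. (Neither you nor the paper checks that $(-d',c,a)$ avoids $D_0$, which is also needed for the identity to land inside the family.) Second, the degenerate locus: you correctly isolate that on $D_0$ the constraint vanishes identically, so every $d\in\mathbb{T}$ yields a Hadamard matrix, but your closure argument, as you yourself concede, only places these matrices in the topological closure of the family and does not produce the required \emph{equivalence} to a family member; deferring to the exhaustive search of Section~\ref{ineq_sec} does not prove this clause either. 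The paper's device is again explicit parameter assignments of $\mathcal{S}_B$ in the same spirit as the symmetry identity --- substitutions that move the now-free parameter $d$ into an argument slot of $\mathcal{S}_B$ --- and that concrete mechanism is what your sketch is missing, although the paper itself only gestures at it with ``one can find similar assignments.''
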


\begin{proof}
Each orthogonality constraint between the rows of $T_{8B}$ simplifies to 0 or
\begin{equation}
\label{ortTB8}
    p_B=1 + a b + a c + bc - d(a + b + c + a b c )=0,
\end{equation}
from which $d$ can be solved with respect to three other variables. We can notice that $p_B$ is palindromic. By Lemma~\ref{pal1}, the obtained solution for $d$ gives unimodular values for any $a,b,c\in\mathbb{T}$ as long as the denominator is not 0. The denominator is 0 exactly on the set $D_0$.

Consider permutation matrices  $P_1=(e_1\;e_2\;e_4\;e_6\;e_3\;e_5\;e_7\;e_8)$ and $P_2=(e_1\;e_2\;e_5\;e_3\;e_6\;e_4\;e_7\;e_8)$. We have that $\mathcal{S}_B(-d',c,a)=P_1\mathcal{S}_B^TP_2(a,b,c)$ implying that $T^{(3)}_{8B}$ is symmetric. Additionally, one can find similar assignments of $\mathcal{S}_B$ that show that the Hadamard matrices in $T_{8B}(D_0)$ are all equivalent to some matrix in $T^{(3)}_{8B}$. 
\end{proof}

In \cite{bruzdaSH}, a third non-affine family called $T^{(3)}_{8C}$ is also introduced. We have left this family out of our considerations as the relationships of the entries are not fully solved, and its existence is observed by utilising numerical methods. We have, however, found and solved a family that seems to contain all the matrices that are obtained from the given numerical construction of $T^{(3)}_{8C}$. We will be referring to it with the same name.

\section{Novel matrix families}
\label{results}

In this section, we introduce four new families of complex Hadamard matrices. All the families are given as the image of a matrix-valued function on a specific domain. 

To be precise, we should show that there are points in the domains to be presented. We have left that to Section~\ref{ineq_sec}, where we show that there are points in the domains that produce matrices that do not belong to any of the other families. 

The families to be presented all have three parameters. In all of the families, the number of parameters coincides with a defect value of a \say{typical} matrix from that family.

\subsection{Family $T_{8C}^{(3)}$}

Consider a matrix-valued function $T_{8C}(a,b,c,d,e,f)=$
$$
\begin{bmatrix}
 a & b d & d & a b & a d & b c & a c & b d \\
 b d & -a & a b & -d & b c & -a d & b d & -a c \\
 d & a b & b d e f & -a e f & a c f & b d f & b c e & -a d e \\
 a b & -d & -a e f & -b d e f & b d f & -a c f & -a d e & -b c e \\
 a c & b d & a d f & b c f & -c d f & -a b c f & -a c & -b c d \\
 b d & -a c & b c f & -a d f & -a b c f & c d f & -b c d & a c \\
 a d & b c & b d e & -a c e & -a c & -b c d & -a b c e & c d e \\
 b c & -a d & -a c e & -b d e & -b c d & a c & c d e & a b c e \\
\end{bmatrix}
.$$

\begin{lemma}
\label{TClemma}
$T_{8C}(a,b,c,d,e,f)$ is a complex Hadamard matrix if and only if 
$p^{(1)}_C=0$, $p^{(2)}_C=0$ and $p^{(3)}_C=0$, where
\begin{align*}
p^{(1)}_C=&a^2 c-a^2 b^2 c+a^2 c^2-b^2 c^2+a^2 d^2-b^2 d^2+c d^2-b^2 c d^2,\\
p^{(2)}_C=&a^2 b c+b c d^2-a^2 c d e+b^2 c d e+a b c^2 f+a b d^2 f-a c d e f+a b^2 c d e f,\\ 
p^{(3)}_C=&a c d-a b^2 c d+a b c^2 e+a b d^2 e+a^2 c d f-b^2 c d f+a^2 b c e f+b c d^2 e f
\end{align*}
and $a,b,c,d,e,f\in\mathbb{T}.$

\end{lemma}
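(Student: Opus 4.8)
The plan is to reduce the matrix equation $T_{8C}T_{8C}^\dagger = 8I$ to the three stated polynomial identities by examining the entries of $T_{8C}T_{8C}^\dagger$ one at a time. Since every entry of $T_{8C}$ is unimodular, the $i$th diagonal entry of $T_{8C}T_{8C}^\dagger$ equals $\sum_k |(T_{8C})_{ik}|^2 = 8$ automatically, so no constraint arises from the diagonal. The matrix is therefore Hadamard if and only if the $\binom{8}{2}=28$ off-diagonal inner products $\langle r_i, r_j\rangle = \sum_k (T_{8C})_{ik}\overline{(T_{8C})_{jk}}$ with $i<j$ all vanish, the remaining $28$ entries below the diagonal being their conjugates by Hermitian symmetry.

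First I would use unimodularity to replace every conjugate $\overline{v}$ by $v^{-1}$, so that each inner product $\langle r_i, r_j\rangle$ becomes a Laurent polynomial in $a,b,c,d,e,f$. Multiplying by an appropriate monomial, which is a unit on $\mathbb{T}^6$ and hence does not change the vanishing locus, clears denominators and turns each orthogonality requirement into a genuine polynomial equation $q_{ij}=0$. The core of the proof is then the bookkeeping claim: after this normalization, each of the $28$ polynomials $q_{ij}$ is either identically zero or equals a unit monomial times one of $p^{(1)}_C,\,p^{(2)}_C,\,p^{(3)}_C$ or a conjugate reciprocal thereof. This is most safely verified with a computer algebra system by expanding each inner product and matching monomials; by hand one organizes the $28$ pairs into the blocks suggested by the sign and phase structure of $T_{8C}$, where many pairs cancel term-by-term and yield $q_{ij}\equiv 0$.

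A useful consistency check, and the structural reason three polynomials suffice, is that each $p^{(i)}_C$ is (anti-)palindromic in the sense of Section~\ref{bg_section}: a direct computation gives $a^2b^2c^2d^2\,(p^{(1)}_C)^* = -p^{(1)}_C$, so $p^{(1)}_C$ is anti-palindromic, while $a^2b^2c^2d^2ef\,(p^{(3)}_C)^* = p^{(2)}_C$, so $p^{(2)}_C$ and $p^{(3)}_C$ form a conjugate-reciprocal pair. This is exactly the form an orthogonality constraint must take, because $\langle r_j,r_i\rangle=\overline{\langle r_i,r_j\rangle}$, and on $\mathbb{T}^6$ conjugation sends $q_{ij}$ to a unit times $q_{ij}^*$; hence the normalized constraints come in conjugate-reciprocal pairs, or are self-paired up to sign, precisely matching the three listed polynomials.

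With the bookkeeping claim in hand, both directions are immediate. If $T_{8C}$ is a complex Hadamard matrix then all $q_{ij}$ vanish, and since $p^{(1)}_C,p^{(2)}_C,p^{(3)}_C$ occur among them up to unit monomials, they vanish as well. Conversely, if $p^{(1)}_C=p^{(2)}_C=p^{(3)}_C=0$ on $\mathbb{T}^6$, then every $q_{ij}$, being identically zero or a unit-monomial multiple of one of these three or of a conjugate reciprocal (which vanishes exactly when its partner does), also vanishes; hence all rows are orthogonal and $T_{8C}\in\mathbb{H}(8)$. The main obstacle is entirely computational: reliably expanding and simplifying the $28$ inner products and certifying the match with the three target polynomials. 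The conceptual content, namely automatic norms, Hermitian symmetry, and the conjugate-reciprocal pairing, is light.
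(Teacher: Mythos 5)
Your proposal is correct and follows essentially the same route as the paper: it reduces the Hadamard condition to the vanishing of the $28$ pairwise row inner products, replaces conjugates by inverses via unimodularity, clears denominators by unit monomials, and verifies (by expansion, in practice with a computer algebra system) that each resulting polynomial is either identically zero or a monomial multiple of one of $p^{(1)}_C, p^{(2)}_C, p^{(3)}_C$. Your added consistency check is a genuine bonus not in the paper's one-line proof, and it is accurate: indeed $a^2b^2c^2d^2\,(p^{(1)}_C)^* = -p^{(1)}_C$ and $a^2b^2c^2d^2ef\,(p^{(3)}_C)^* = p^{(2)}_C$, matching the Hermitian symmetry $\langle r_j,r_i\rangle = \overline{\langle r_i,r_j\rangle}$ and the paper's general palindromic framework from Section~\ref{bg_section}.
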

\begin{proof}
We need that $a,b,c,d,e,f\in\mathbb{T}$ for unimodular entries. Now we can identify complex conjugates of the variables with their multiplicative inverses. Each inner product between two distinct rows of $T_{8C}$, that is not equal to 0, can be factored to $p_{C}^{(i)}/q$ for some $i\in\{1,2,3\}$, where $q$ is a monomial on variables $a,b,c,d,e,f$.
\end{proof}

\begin{theorem}
\label{TC_theorem}
    Let $T^{(3)}_{8C}:=S_C(D)$, where
    $$\mathcal{S}_C(a,b,c)=T_{8C}(a,b,c,d',e',f'),$$
    with 
    \begin{align*}
    d'=&-\sqrt{c A_1}/\sqrt{B_1},\\
    e'=&d' ((1 - a) (1 + a) (a - b^2) (a + b^2) A_3 + 
    \sqrt{C})/B_3,\\
    f'=&((c-b^2) (c+1) (a^2+c) (a^2-b^2 c)A_2+\sqrt{C})/B_2,
    \end{align*}
    where
    \begin{align*}
        A_1=&-a^2 +a^2 b^2 -a^2 c+b^2 c,\\
        A_2=&a^2+a^4-4 a^2 b^2+b^4+a^2 b^4,\\
        A_3=&-a^4 b^2-a^4 c+a^2 b^2 c+a^4 b^2 c-a^2 b^4 c-a^2 c^2-a^4 c^2+6 a^2 b^2 c^2\\
        &-b^4 c^2-a^2 b^4 c^2-a^2 c^3+b^2 c^3+a^2 b^2 c^3-b^4 c^3-b^2 c^4,\\
        B_1=&a^2-b^2+c-b^2 c, \\
        B_2=&-2 a (a-b) (-1+b) (1+b) (a+b) A_3,\\
        B_3=& 2 bc (a-c)(a+c)A_1A_2,\\
        C=&(1+c)^2 (a^2+c)^2 (-b^2+c)^2 (a^2-b^2 c)^2A_2^2 -4 a^2 (a^2-b^2)^2 (-1+b^2)^2 A_3^2
    \end{align*}
    and $$D=\{(a,b,c)\in\mathbb{T}^3\;|\;C/(a^8b^8c^4) \leq0,\;B_1,B_2,B_3\neq 0\}.$$ $T^{(3)}_{8C}$ is a 3-parameter family of complex Hadamard matrices.
\end{theorem}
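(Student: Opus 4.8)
The plan is to reduce the statement to the characterisation in Lemma~\ref{TClemma}: I must show that $(d',e',f')$ solves the system $p^{(1)}_C = p^{(2)}_C = p^{(3)}_C = 0$ with $d',e',f'\in\mathbb{T}$ for every $(a,b,c)\in D$, and then check the Jacobian-rank condition in the definition of a $3$-parameter family. The system is nearly triangular: $p^{(1)}_C$ involves only $a,b,c,d$, so I would solve it for $d$ first. As a polynomial in $d$ it reads $p^{(1)}_C = B_1 d^2 - cA_1$ with no linear term, and a direct check shows it is anti-palindromic (one verifies $a^2b^2c^2d^2\,(p^{(1)}_C)^* = -p^{(1)}_C$). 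Hence $d^2 = cA_1/B_1$, and since the linear coefficient vanishes the anti-palindromic analogue of Lemma~\ref{pal2} applies with the trivial bound $0\le 2$; the branch $d' = -\sqrt{cA_1}/\sqrt{B_1}$ is therefore unimodular whenever $B_1\neq 0$, which is exactly the constraint $B_1\neq 0$ imposed in $D$.

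After substituting $d=d'$, I would write the remaining two constraints as $p^{(2)}_C = \alpha + \beta e + \gamma f + \delta ef$ and $p^{(3)}_C = -\delta + \gamma e - \beta f + \alpha ef$, where $\alpha = bc(a^2+d^2)$, $\beta = cd(b^2-a^2)$, $\gamma = ab(c^2+d^2)$ and $\delta = acd(b^2-1)$. The matching of these coefficient patterns (the constant and $ef$-coefficients coincide, and the $e$- and $f$-coefficients agree up to sign) is the structural key: the combination $\alpha\,p^{(2)}_C - \delta\,p^{(3)}_C$ annihilates the $ef$-term and leaves the linear relation $(\alpha^2+\delta^2) + (\alpha\beta-\delta\gamma)e + (\alpha\gamma+\delta\beta)f = 0$. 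Eliminating one unknown through this relation and resubstituting into $p^{(2)}_C$ yields a single quadratic whose roots carry the square root $\sqrt{C}$; performing the elimination and collecting coefficients should reproduce the stated forms of $e'$ and $f'$, with $C$ their common discriminant.

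For unimodularity I would observe that the quadratic obtained above is again (anti-)palindromic in the remaining variable with unimodular parameters $a,b,c,d'$, so Lemma~\ref{pal2} governs its roots: they are unimodular exactly when the discriminant is suitably signed, and by the scaling remark after Lemma~\ref{pal2} this condition becomes precisely $C/(a^8b^8c^4)\le 0$. Since eliminating between two coupled bilinear equations produces a single common discriminant, this one inequality controls both $e'$ and $f'$, which is the first defining condition of $D$. With $d',e',f'\in\mathbb{T}$ satisfying the whole system, Lemma~\ref{TClemma} gives $\mathcal{S}_C(a,b,c)\in\mathbb{H}(8)$ on all of $D$. To conclude that $T^{(3)}_{8C}$ is a genuine $3$-parameter family, I would exhibit one explicit interior point of $D$ (so that $C/(a^8b^8c^4)<0$ strictly and $B_1,B_2,B_3\neq 0$), regard $\mathcal{S}_C$ as a map $\mathbb{R}^3\to\mathbb{R}^{64}$ in real and imaginary coordinates, and compute that its Jacobian there has rank $3$, as for the other families.

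The main obstacle will be the bilinear elimination and the bookkeeping that matches the resulting quadratic to the explicit polynomials $A_2,A_3,B_2,B_3$ and the discriminant $C$. Since forming the quadratic requires eliminating a variable and hence squaring, I must pin down the square-root branches in $e'$ and $f'$ so that the chosen pair satisfies \emph{both} $p^{(2)}_C=0$ and $p^{(3)}_C=0$ (ruling out extraneous roots), and I must confirm that the single scaled discriminant $C/(a^8b^8c^4)$ indeed controls the unimodularity of $e'$ and $f'$ simultaneously rather than each needing its own bound.
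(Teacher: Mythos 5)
Your proposal is correct and follows essentially the same route as the paper: reduce to the system of Lemma~\ref{TClemma}, obtain $d'$ from the anti-palindromic quadratic $p^{(1)}_C = B_1 d^2 - cA_1$, and deduce unimodularity of $d'$, $e'$, $f'$ from the (anti-)palindromic structure via Lemma~\ref{pal2} with the scaled-discriminant condition $C/(a^8b^8c^4)\leq 0$, which is precisely the paper's (much terser) proof that simply verifies the stated assignment and invokes the same lemmas. Your bilinear elimination deriving $e'$ and $f'$ (with the correct coefficient pattern $\alpha+\beta e+\gamma f+\delta ef$ and $-\delta+\gamma e-\beta f+\alpha ef$), the branch-matching caveat, and the explicit Jacobian-rank check are extra detail the paper leaves implicit or defers to Section~\ref{ineq_sec}, but they do not change the underlying argument.
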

\begin{proof}
 One can check that the assignment $d=d'$, $e=e'$, and $f=f'$ satisfy the system in Lemma~\ref{TClemma}. We can notice that $e'$ and $f'$ are roots of a palindromic polynomial, so by Lemma~\ref{pal2} they get unimodular values whenever $a,b,c,d'\in \mathbb{T}$ and $C/(a^8b^8c^4)\leq 0$. Also, $d'$ is a root of an anti-palindromic polynomial, and it gets unimodular values whenever $a,b,c\in \mathbb{T}$.
\end{proof}

There are other solutions to the system in Lemma~\ref{TClemma} than the one given in Theorem~\ref{TC_theorem}. The variables $(d,e,f)$ could also be assigned to $(-d',-e',f')$, $(d',-\overline{e'},\overline{f'})$ or $(-d', \overline{e'},\overline{f})$ corresponding different branches of the square root. Numerical results suggest that the different solutions result in equivalent families. There are also complex Hadamard matrices that correspond to roots of $B_1,$ $B_2$ or $B_3$ and thus are not included in the family. One such matrix is the unique real Hadamard matrix of order 8.

It can be noticed that if $(a,b,c,d,e,f)$ is a solution to the system in Lemma~\ref{TClemma}, then so is $(a,b,d^2/c,d,e,f)$. We also have that $$\mathscr{D}(T_{8C})^T(a,b,c,d,e,f)=\mathscr{D}(T_{8C})(a,b,d^2/c,d,e).$$ However, it does not always hold that $\mathscr{D}(\mathcal{S}_C)^T(a,b,c)=\mathscr{D}(\mathcal{S}_C)(a,b,(d')^2/c)$, which would imply that $T_{8C}^{(3)}$ is symmetric, as the solution $(a,b,d^2/c,d,e,f)$ can also be in one of the other solution branches. On the other hand, as the different branches seem to give equivalent families, we conjecture that $T^{(3)}_{8C}$ is symmetric.

\subsection{Family $T_{8D}^{(3)}$}
Consider a matrix-valued function $T_{8D}(a,b,c,d,e,f)=$
$$
\begin{bmatrix}
  1 & -c & c & -e & -c & e & -e & c e \\
 1 & c & -e & c & -e & c & -e & -c e \\
 c & -1 & c & e & -c & -e & c e & -e \\
 c & 1 & e & c & e & c & c e & e \\
 b & f & a d & a b & d f & b f & a b d & a d f \\
 b & -f & a b & a d & -b f & -d f & a b d & -a d f \\
 f & b & -a d & a b & -d f & b f & -a d f & -a b d \\
 f & -b & a b & -a d & -b f & d f & -a d f & a b d \\
\end{bmatrix}
.$$

\begin{lemma}
    \label{TDlemma}
    $T_{8D}(a,b,c,d,e,f)$ is a complex Hadamard matrix if and only if $p^{(1)}_D=0$, $p^{(2)}_D=0$, $p^{(3)}_D=0$ and $p^{(4)}_D=0$, where
    \begin{align*}
p^{(1)}_D=&a b c+a b c d-b c e-a d e-b c f-a d f+e f+d e f,\\
p^{(2)}_D=&\;a c d+a b c d-a b e-b c e+a d f+c d f-e f-b e f,\\ 
p^{(3)}_D=&\;a b c+a b d+b e+a d e-b c f-a c d f-c e f-d e f,\\
p^{(4)}_D=&\;a b d+a c d+b e+a b e+c d f+a c d f+b e f+c e f,
\end{align*}
and $a,b,c,d,e,f\in\mathbb{T}.$
\end{lemma}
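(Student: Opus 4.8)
The plan is to mirror the structure of Lemma~\ref{TClemma}: identify the unimodular entries of $T_{8D}$ with formal variables whose conjugates are their inverses, compute all pairwise inner products between distinct rows, and show that each one either vanishes identically or factors as $p_D^{(i)}/q$ for a suitable monomial $q$ and some $i\in\{1,2,3,4\}$. Since $T_{8D}$ is $8\times 8$, there are $\binom{8}{2}=28$ row-pair inner products to examine. First I would verify the ``only if'' direction: assuming $T_{8D}\in\mathbb{H}(8)$, the diagonal condition $HH^\dagger=8I$ forces each off-diagonal entry to be zero, and after clearing denominators each surviving constraint is one of $p_D^{(1)},\ldots,p_D^{(4)}$ (up to a unit monomial factor that is nonzero on $\mathbb{T}^6$), so the four polynomial equations must hold. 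For the ``if'' direction, I would substitute the four vanishing conditions back and confirm that all $28$ inner products reduce to $0$, giving orthogonality and hence $T_{8D}\in\mathbb{H}(8)$.

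The key computational step is the factorisation. Each inner product $\langle R_j,R_k\rangle=\sum_{m} (T_{8D})_{j,m}\overline{(T_{8D})_{k,m}}$ is, after replacing each $\bar t$ with $t^{-1}$, a Laurent polynomial in $a,b,c,d,e,f$. Multiplying through by an appropriate monomial $q$ to clear negative exponents yields an ordinary polynomial, which I would then attempt to recognise as a $\mathbb{C}$-linear combination (in fact, typically a signed or monomial multiple) of the four listed polynomials. The block structure of $T_{8D}$---the top $4\times 8$ rows built from $\{1,c,e\}$ and the bottom $4\times 8$ rows built from $\{a,b,d,f\}$---suggests the $28$ pairs split naturally into three groups (top-top, bottom-bottom, top-bottom), and I expect many of the top-top and bottom-bottom pairs to vanish identically by the sign patterns, so that the nontrivial constraints come predominantly from the top-bottom cross terms, producing exactly the four families $p_D^{(i)}$.

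The main obstacle I anticipate is purely bookkeeping rather than conceptual: confirming that every one of the $28$ inner products lands in the ideal generated by $\{p_D^{(1)},p_D^{(2)},p_D^{(3)},p_D^{(4)}\}$ \emph{and} that no additional independent constraint is hidden among them, so that the stated four equations are genuinely necessary and sufficient. One must also check that the clearing monomials $q$ are units on $\mathbb{T}^6$ (i.e.\ never zero for unimodular variables), which is immediate since monomials in unimodular variables have modulus $1$; this guarantees that dividing by $q$ does not discard or introduce solutions. Because each $p_D^{(i)}$ has eight terms and the polynomials share many monomials, care is needed to distinguish which of the four a given inner product corresponds to, but this is a finite verification---best carried out with a computer algebra system---and presents no theoretical difficulty. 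Accordingly, the proof I would write simply records the outcome of this verification, exactly as the author's proof of Lemma~\ref{TClemma} does.
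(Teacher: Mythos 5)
Your proposal is correct and takes essentially the same route as the paper: the paper's proof likewise notes that unimodularity of the entries forces $a,b,c,d,e,f\in\mathbb{T}$ (so conjugates may be replaced by inverses) and then reduces the row-orthogonality system, exactly as in Lemma~\ref{TClemma}, by observing that every nonvanishing inner product of distinct rows factors as a monomial unit times one of $p_D^{(1)},\ldots,p_D^{(4)}$. The remaining content is precisely the finite computational verification you describe.
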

\begin{proof}
      We need that $a, b, c, d, e, f\in\mathbb{T}$ for unimodular entries. Similarly to Lemma~\ref{TClemma}, the system of equations that requires orthogonality of the rows can be reduced to $p^{(1)}_D=0$, $p^{(2)}_D=0$, $p^{(3)}_D=0$ and $p^{(4)}_D=0$.
\end{proof}

\begin{theorem}
    Let $T^{(3)}_{8D}:=\mathcal{S}_D(D),$ where
    $$\mathcal{S}_D(a,b,c)=T_{8D}(a,b,c,d',e',f')$$
    with
    \begin{align*}
    &d'=-A_1/B_1,\\
    &e'=c ((a - b) (a b - c) (1 + c) + \sqrt{C})/B_2,\\
    &f'=-((1 + c)A_2 + \sqrt{C})/(2 B_3),
    \end{align*}
    where
    \begin{align*}
        A_1&=b c+a b c+b^2 c+b c^2,\\
        A_2&=2 a b+a^2 b+a b^2+a c+b c+2 a b c,\\
        B_1&=a b+a c+b c+a b c,\\
        B_2&=2 (c-1)B_1,\\
        B_3&=a b+c+2 a c+2 b c+a b c+c^2,\\
        C&=(1+c)^2 A_2^2-4 a b B_3^2,
    \end{align*}
    and $$D=\{(d,e,f)\in\mathbb{T}^3\;|\; C/(a^2b^2c^2)\leq 0,\;B_1,B_2,B_3\neq0\}.$$ $T^{(3)}_{8D}$ is a 3-parameter family of complex Hadamard matrices.
\end{theorem}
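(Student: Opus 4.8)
The plan is to follow exactly the template established by the preceding theorem for $T_{8C}^{(3)}$, since the structure of the claim is identical: I must verify that the explicit assignment $(d',e',f')$ solves the polynomial system of Lemma~\ref{TDlemma}, and then invoke the palindromic/anti-palindromic lemmas to guarantee unimodularity on the stated domain $D$. First I would substitute $d=d'$, $e=e'$, $f=f'$ into the four polynomials $p^{(1)}_D,\ldots,p^{(4)}_D$ and check that each vanishes identically. Rather than grinding through this by hand, the natural route is to treat the system as a chain: solve $d$ first from a polynomial that is linear in $d$, then reduce the remaining equations so that $f$ (and symmetrically $e$) satisfies a quadratic. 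The forms of $A_1,B_1$ and of the quadratic data $A_2,B_3,C$ strongly suggest this ordering, with $d'=-A_1/B_1$ arising from a polynomial of the form $B_1 d + A_1=0$ and $f'$ being a root of $B_3 f^2 + (1+c)A_2 f + (\text{const})=0$ via the quadratic formula, the discriminant of which is exactly $C$.

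Next I would address unimodularity. The claim that $d'$ is unimodular for all $a,b,c\in\mathbb{T}$ should follow from Lemma~\ref{pal1}: I would exhibit the polynomial whose root is $d'$ as anti-palindromic (as noted in the $T_{8C}$ case, Lemma~\ref{pal1} and Lemma~\ref{pal2} apply equally to the anti-palindromic setting), so that $d'=-A_1/B_1$ automatically has modulus~$1$ whenever $B_1\neq 0$. For $e'$ and $f'$, I would identify each as a root of a palindromic quadratic in the relevant variable and apply Lemma~\ref{pal2}: unimodularity then holds precisely when the discriminant condition of that lemma is met, which here is the stated inequality $C/(a^2b^2c^2)\leq 0$. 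The factor $a^2b^2c^2$ in the denominator is exactly the monomial $x_2^{\alpha_2}\cdots x_n^{\alpha_n}|p_2|$ rescaling from the remark following Lemma~\ref{pal2}, so the condition $C/(a^2b^2c^2)\leq 0$ is the practical reformulation of $|p_1|/|p_2|\leq 2$. The conditions $B_1,B_2,B_3\neq 0$ simply exclude the degenerate cases where the leading coefficients vanish (and, as in the $T_{8C}$ discussion, these excluded points correspond to matrices in other branches or families, such as the real Hadamard matrix).

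Finally, to justify calling $T^{(3)}_{8D}$ a genuine \emph{3-parameter} family in the sense of the paper's definition, I would need the Jacobian of $\mathcal{S}_D$ (viewed as a map $\mathbb{R}^3\to\mathbb{R}^{64}$) to attain rank~$3$ at some interior point of $D$; following the commented-out argument for $T_{8C}$, this is checked by evaluating at a convenient point such as a $(a,b,c)$ with $C>0$ and all $B_i\neq 0$, confirming nonempty interior and rank~$3$ there, though the paper defers the nonemptiness of the domain to Section~\ref{ineq_sec}.

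I expect the main obstacle to be purely the algebraic verification that $(d',e',f')$ annihilates all four polynomials simultaneously: because the system has four equations but only three free parameters after $d$ is eliminated, I must confirm that the two quadratics for $e'$ and $f'$ are genuinely \emph{compatible} — that is, that the same branch of $\sqrt{C}$ solves the remaining equations consistently rather than producing contradictory constraints. Establishing this compatibility (equivalently, that the two quadratics share the relevant root after the substitution $d=d'$) is the delicate point; the unimodularity step, by contrast, is a routine application of Lemmas~\ref{pal1} and~\ref{pal2} once the solution is confirmed.
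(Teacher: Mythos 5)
Your proposal takes essentially the same route as the paper: the paper's proof is exactly your two main steps — check that the assignment $(d',e',f')$ satisfies the system of Lemma~\ref{TDlemma}, then apply Lemma~\ref{pal1} and Lemma~\ref{pal2} to get unimodularity under the condition $C/(a^2b^2c^2)\leq 0$ with $B_1,B_2,B_3\neq 0$. One minor correction: here the linear polynomial $B_1 d + A_1$ defining $d'$ is in fact \emph{palindromic} (unlike the anti-palindromic case in $T_{8C}^{(3)}$), as the paper states, though since both lemmas hold in either setting this does not affect your argument, and your explicit Jacobian-rank check is a sound addition that the paper leaves implicit in its definition of a 3-parameter family.
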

\begin{proof}
    One can check that the assignment $d = d'$, $e = e'$, and $f = f'$
satisfy the system in Lemma~\ref{TDlemma}. Additionally, $d'$, $e'$ and $f'$ are all roots of palindromic polynomials. By Lemma~\ref{pal1} and Lemma~\ref{pal2}, $d'$, $e'$ and $f'$ are unimodular whenever $a,b,c\in\mathbb{T}$ and $C/(a^2b^2c^2)\leq 0$.
\end{proof}

Changing the sign of the square root also gives a valid solution. Numerical results suggest that this solution results in an equivalent family. There are also multiple solutions for the system in Lemma~\ref{TDlemma} that correspond to points that are roots of $B_1$, $B_2$ or $B_3$ and therefore not included in the solutions given.

In \cite{bruzdaSH}, it was suggested that the family $T^{(1)}_8$ could be contained in $T_{8B}^{(3)}$ or $T_{8C}^{(3)}$. Testing this with the algorithm given in Section~\ref{alg} shows that this is not the case. The following propositions show that $T^{(1)}_8$ is most likely contained in $(T^{(3)}_{8D})^T$.

\begin{proposition}
\label{T1subfam}
    For all $H\in T^{(1)}_8$ there exists $H'\in T_{8D}(\mathbb{T}^6)$ such that $H^T\cong H'$.
\end{proposition}
\begin{proof}
    Consider permutation matrices
        $$P_1=(e_1\;e_6\;e_5\;e_8\;e_7\;e_4\;e_3\;e_2)\text{ and }P_2=(e_1\;e_6\;e_3\;e_5\;e_2\;e_4\;e_8\;e_7).$$
We have that
 $$\mathscr{D}(T_{8D})(e^{i\pi\frac{3}{10}}u/y,\;iz,\;i,\;-iyz/x,\;e^{i\pi\frac{3}{10}},\;i,ix)=P_1T_8^TP_2(x,y,z,y).$$

\end{proof}

\subsection{Family $T_{8E}^{(3)}$}
Consider a matrix valued function $T_{8E}(a,b,c,d,e,f)=$
$$
\begin{bmatrix}
1 & 1 & 1 & 1 & 1 & 1 & 1 & 1 \\
 1 & d & e & -f & d f & e f & -d e & d e f \\
 1 & a & -b & f & -a f & b f & a b & a b f \\
 1 & -d & b & c & c d & -b c & b d & b c d \\
 1 & -a & -e & -c & -a c & -c e & -a e & a c e \\
 1 & -a d & b e & -1 & a d & -b e & -a b d e & a b d
   e \\
 1 & a d & -1 & -c f & -a c d f & c f & -a d & a c d
   f \\
 1 & -1 & -b e & c f & -c f & -b c e f & b e & b c e
   f \\
\end{bmatrix}.
$$

\begin{lemma}
\label{TElemma}
    $T_{8E}(a,b,c,d,e,f)$ is a complex Hadamard matrix if and only if $p^{(1)}_E=0$, $p^{(2)}_E=0$, $p^{(3)}_E=0$ and $p^{(4)}_E=0$, where
        \begin{align*}  
        &p^{(1)}_E=1+b+c-b c-d+b d+c d+b c d,\\
        &p^{(2)}_E= 1 - a - c - a c - e - a e - c e + a c e,\\
        &p^{(3)}_E= 1 + a - b + a b + f - a f + b f + a b f,\\
        &p^{(4)}_E=1+d+e-d e-f+d f+e f+d e f,
        \end{align*}
    and $a,b,c,d,e,f\in\mathbb{T}.$
\end{lemma}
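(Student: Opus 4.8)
The plan is to prove Lemma~\ref{TElemma} exactly as the earlier lemmas (Lemma~\ref{TClemma} and Lemma~\ref{TDlemma}) are proven: by computing all pairwise inner products between distinct rows of $T_{8E}$ and verifying that the complex Hadamard condition $HH^\dagger = 8I$ is equivalent to the four stated polynomial constraints. First I would note that, since every entry of $T_{8E}(a,b,c,d,e,f)$ is a monomial in $a,b,c,d,e,f$ (up to sign), imposing $a,b,c,d,e,f\in\mathbb{T}$ is exactly what forces every entry to be unimodular, which is the first requirement for membership in $\mathbb{H}(8)$. Under this unimodularity assumption I may identify each conjugate $\overline{a}$ with the multiplicative inverse $a^{-1}$, and similarly for the other variables, so that every inner product becomes a Laurent polynomial in the six variables.

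The core computation is then to form the $\binom{8}{2}=28$ inner products $\langle r_j, r_k\rangle = \sum_{l=1}^{8} (T_{8E})_{j,l}\,\overline{(T_{8E})_{k,l}}$ for $j<k$, substitute $\overline{x}=x^{-1}$, and clear denominators by multiplying each expression by a suitable monomial $q$ in $a,b,c,d,e,f$ (which is nonzero on $\mathbb{T}^6$ and hence does not affect the vanishing locus). The claim to verify is that each resulting expression is either identically zero or a unit multiple of one of $p^{(1)}_E, p^{(2)}_E, p^{(3)}_E, p^{(4)}_E$. Because the diagonal products are trivially $8$ and the off-diagonal products split cleanly into these four polynomial types, orthogonality of all rows (equivalently $HH^\dagger=8I$) holds if and only if all four polynomials vanish. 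This is precisely the reduction asserted in the statement, and the final step is simply to read off that $a,b,c,d,e,f\in\mathbb{T}$ together with $p^{(1)}_E=p^{(2)}_E=p^{(3)}_E=p^{(4)}_E=0$ is both necessary and sufficient.

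The main obstacle is not conceptual but bookkeeping: there are $28$ inner products to compute and simplify, and one must confirm that each factors as a monomial times one of the four listed polynomials with no stray extra constraints appearing. I would organise this by grouping the rows so that the structure becomes transparent — for instance, each of $p^{(1)}_E$ through $p^{(4)}_E$ should be traceable to a specific block of orthogonality relations, much as in the $V_8$ case of Lemma~\ref{Vlemma} where the three constraints arose from distinct row pairs. In practice this verification is best delegated to a symbolic algebra system, and following the precedent set in the proofs of Lemma~\ref{TClemma} and Lemma~\ref{TDlemma}, I would present it compactly: state that each nonzero inner product factors as $p^{(i)}_E/q$ for some $i\in\{1,2,3,4\}$ and a monomial $q$, so that the Hadamard condition is equivalent to the simultaneous vanishing of the four polynomials. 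The only point requiring genuine care is to ensure that no inner product produces a fifth independent constraint, i.e.\ that the four polynomials listed really do capture every orthogonality requirement; this is confirmed by the exhaustive pairwise computation.
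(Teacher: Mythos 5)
Your proposal is correct and matches the paper's proof essentially verbatim: the paper likewise imposes $a,b,c,d,e,f\in\mathbb{T}$ to get unimodular entries, identifies conjugates with multiplicative inverses, and observes that every nonzero inner product between distinct rows factors (after clearing a monomial denominator) into one of the four listed polynomials, exactly as in the proofs of Lemma~\ref{TClemma} and Lemma~\ref{TDlemma}. The only difference is presentational --- you spell out the $\binom{8}{2}=28$ inner-product bookkeeping that the paper leaves implicit.
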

\begin{proof}
    Similar to Lemma~\ref{TClemma} and Lemma~\ref{TDlemma}, any constraint requiring orthogonality of the rows of $T_{8E}(a,b,c,d,e,f)$ can be factored into one of the equations above.
\end{proof}

As each of the equations contains only three of the six variables, the system is straightforward to solve.

\begin{theorem}
\label{TEtheorem}
   Let $T_{8E}^{(3)}:=\mathcal{S}_{E}(\mathbb{T}^3),$ where
   $$\mathcal{S}_E(a,b,c)=T_{8E}(a,b,c,d',e',f')$$ with
\begin{align*}
 &d'=(-1-b-c+b c)/(-1+b+c+b c),\\
 &e'=(-1+a+c+a c)/(-1-a-c+a c),\\
 &f'=(-1-a+b-a b)/(1-a+b+a b).
\end{align*}
$T^{(3)}_{8E}$ is a 3-parameter family of complex Hadamard matrices.
\end{theorem}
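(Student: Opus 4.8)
The plan is to reduce everything to Lemma~\ref{TElemma}: it suffices to show that for $a,b,c\in\mathbb{T}$ (outside the lower-dimensional loci where a denominator vanishes) the triple $(d',e',f')$ lies in $\mathbb{T}^3$ and satisfies all four equations $p^{(1)}_E=p^{(2)}_E=p^{(3)}_E=p^{(4)}_E=0$; since every entry of $T_{8E}$ is $\pm1$ times a monomial, unimodular variables then force unimodular entries. The essential observation is that each of $p^{(1)}_E,p^{(2)}_E,p^{(3)}_E$ is \emph{linear} in exactly one of $d,e,f$: writing $p^{(1)}_E=(-1+b+c+bc)\,d+(1+b+c-bc)$ and likewise for the other two, solving the single linear equation recovers precisely the stated $d',e',f'$. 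Thus the first three constraints hold by construction, and the only real work is the fourth.

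For unimodularity I would apply Lemma~\ref{pal1}. First I would record that $p^{(1)}_E,p^{(2)}_E,p^{(3)}_E$ are palindromic with $\alpha=1$ in the solved variable; for instance $bcd\cdot(p^{(1)}_E)^*=p^{(1)}_E$, so in the notation of Lemma~\ref{pal1} the leading coefficient equals $\overline{p_0}$ times the remaining monomial, and analogously for the other two. Lemma~\ref{pal1} then yields at once that $d',e',f'\in\mathbb{T}$ whenever $a,b,c\in\mathbb{T}$ and the respective denominators $-1+b+c+bc$, $-1-a-c+ac$, $1-a+b+ab$ are nonzero; these vanish only on a lower-dimensional subset of $\mathbb{T}^3$, where $\mathcal{S}_E$ is simply left undefined.

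The heart of the proof is the fourth equation, and I would first flag the structural reason it must hold: the system of Lemma~\ref{TElemma} has four equations in the six unknowns $a,\dots,f$, yet the family carries three free parameters $a,b,c$, so one equation is necessarily a consequence of the other three on the solution locus. Concretely, $p^{(4)}_E$ is itself palindromic and linear in $f$, so $p^{(4)}_E=0$ is equivalent to $f=(1+d+e-de)/(1-d-e-de)$, and verifying $p^{(4)}_E(d',e',f')=0$ reduces to the single rational identity
$$f'=\frac{1+d'+e'-d'e'}{\,1-d'-e'-d'e'\,},\qquad d'=d'(b,c),\ e'=e'(a,c),\ f'=f'(a,b).$$
The content is that although the right-hand side a priori involves all of $a,b,c$, the dependence on $c$ must cancel, leaving exactly $f'(a,b)$. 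I would establish this by clearing denominators and checking that the resulting polynomial in $a,b,c$ vanishes identically, a finite and purely mechanical computation (and the step I expect to be the main obstacle by hand).

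The cleanest way I see to organize that last identity is the Cayley substitution $a=(1-A)/(1+A)$, $b=(1-B)/(1+B)$, $c=(1-C)/(1+C)$ with $A,B,C$ purely imaginary. Under it each numerator and denominator becomes a bilinear form such as $1+B+C-BC$ whose conjugate, for purely imaginary arguments, is the companion form $1-B-C-BC$; this both re-proves unimodularity transparently (for example $d'=-P/\overline{P}$ with $P=1+B+C-BC$) and renders the required identity a comparison of such forms, a markedly more symmetric computation in which the cancellation of $C$ becomes visible. Either route, brute-force expansion or the Cayley reduction, confirms the identity and hence that $\mathcal{S}_E(a,b,c)$ satisfies Lemma~\ref{TElemma}. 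The remaining requirement that the Jacobian of $\mathcal{S}_E$ attain rank $3$, so that this is a genuine $3$-parameter family with nonempty interior, is, as for the earlier families, deferred to Section~\ref{ineq_sec}.
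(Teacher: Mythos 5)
Your proof follows the paper's argument exactly: solve the three equations $p^{(1)}_E=p^{(2)}_E=p^{(3)}_E=0$, each linear in one of $d,e,f$, to obtain $d',e',f'$, check that $p^{(4)}_E=0$ is then automatically satisfied, and invoke Lemma~\ref{pal1} (via the palindromic structure) for unimodularity — the paper's proof is precisely this, stated in three sentences, with the verification of $p^{(4)}_E$ left as an unelaborated claim just as you reduce it to a mechanical identity. Your extra scaffolding (the degrees-of-freedom heuristic and the Cayley substitution) only reorganizes the same route, and your explicit exclusion of the loci where the denominators vanish is if anything more careful than the paper, whose remark that the denominators have no roots in $\mathbb{T}$ is not quite right: e.g.\ $-1+b+c+bc=(1+b)(1+c)-2$ vanishes at $(b,c)=(i,-i)$, though the numerator vanishes there too.
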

\begin{proof}
Consider the system of polynomial equations given in Lemma~\ref{TElemma}. Variables $d,\ e$ and $f$ can be solved with respect to $a,\ b$ and $c$ from the equations $p_E^{(1)}=0,\;p_E^{(2)}=0$ and $p_E^{(3)}=0$. The obtained solutions satisfy $p_E^{(4)}=0$ as well. By Lemma~\ref{pal1}, $d'$, $e'$ and $f'$ gets unimodular values whenever $a,b,c\in\mathbb{T}$.
\end{proof}

As the denominators of $d'$, $e'$ and $f'$ have no roots in $\mathbb{T},$ the solution given in Theorem~\ref{TEtheorem} contains all the solutions for the equations of Lemma~\ref{TElemma}.

\subsection{Family $T^{(3)}_{8F}$}

Consider a matrix-valued function $T_{8F}(a,b,c,d,e,f,g,h)=$
$$
\begin{bmatrix}
 1 & a b & e f & a b e f & c d & d h & c g & g h \\
 a b & 1 & -a b e f & -e f & d h & c d & -g h & -c g \\
 e f & -a b e f & -1 & a b & -c g & g h & c d & -d h \\
 a b e f & -e f & a b & -1 & -g h & c g & -d h & c d \\
 b e & -a e & b f & -a f & 1 & -c h & d g & -c d g h \\
 a e & -b e & -a f & b f & c h & -1 & -c d g h & d g \\
 b f & a f & -b e & -a e & -d g & -c d g h & 1 & c h \\
 a f & b f & a e & b e & -c d g h & -d g & -c h & -1 \\
\end{bmatrix}.
$$

\begin{lemma}
\label{TFlemma}
     $T_{8F}(a,b,c,d,e,f,g,h)$ is a complex Hadamard matrix if and only if
     if $p^{(1)}_F=0$, $p^{(2)}_F=0$, $p^{(3)}_F=0$, $p^{(4)}_F=0$, $p^{(5)}_F=0$ and $p^{(6)}_F=0$, where
        \begin{align*}  
        p^{(1)}_F=&-c d+b^2 c d+b e-b c^2 e+b d^2 e-b c^2 d^2 e-c d e^2+b^2 c d e^2,\\
        p^{(2)}_F=&-a e+a d^2 e+d h-a^2 d h-d e^2 h+a^2 d e^2 h+a e h^2-a d^2 e h^2,\\
        p^{(3)}_F=& \ b f+b c^2 f-c g-b^2 c g+c f^2 g+b^2 c f^2 g-b f g^2-b c^2 f g^2,\\
        p^{(4)}_F=&-a f-a f g^2+g h+a^2 g h+f^2 g h+a^2 f^2 g h-a f h^2-a f g^2 h^2,\\
        p^{(5)}_F=&\ a b c^2 d^2 e f+a b c^2 e f g^2-c d g h+a^2 b^2 c d g h-c d e^2 f^2 g h\\&+a^2 b^2 c d e^2 f^2 g h-a b d^2 e f h^2-a b e f g^2 h^2,\\
        p^{(6)}_F=&\ a b e f+a b d^2 e f g^2+a^2 c d e^2 g h-b^2 c d e^2 g h+a^2 c d f^2 g h\\&-b^2 c d f^2 g h-a b c^2 e f h^2-a b c^2 d^2 e f g^2 h^2,
        \end{align*}
    and $a,b,c,d,e,f,g,h\in\mathbb{T}$.    

\end{lemma}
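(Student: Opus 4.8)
The plan is to verify the biconditional by computing, for each pair of distinct rows of $T_{8F}(a,b,c,d,e,f,g,h)$, the inner product with respect to the conjugate transpose, and to show that after identifying $\overline{a}=a^{-1}$ and so on (valid because unimodularity of $a,\ldots,h$ is forced by the requirement of unimodular entries, exactly as in Lemma~\ref{TClemma} and Lemma~\ref{TDlemma}), each such inner product either vanishes identically or factors as $p_F^{(i)}/q$ for a single $i\in\{1,\ldots,6\}$ and a monomial $q$ in $a,\ldots,h$. Since a monomial in unimodular variables never vanishes, the orthogonality of rows $j$ and $k$ is then equivalent to $p_F^{(i)}=0$, and collecting over all $\binom{8}{2}=28$ pairs yields the stated system.

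First I would record that unimodular entries require $a,b,c,d,e,f,g,h\in\mathbb{T}$: each entry of $T_{8F}$ is a $\pm$ monomial in these eight variables, so $|H_{jk}|=1$ for all entries forces each variable onto $\mathbb{T}$ (for instance $|ab|=1$ and $|a/b|=|a e/(b e)|=1$ together with the other entries pin down $|a|=|b|=1$, and similarly for the rest). Having justified this, I can replace every conjugated variable in the inner products by its reciprocal, so that each inner product becomes a Laurent polynomial in $a,\ldots,h$. Then I would clear denominators by multiplying through by the appropriate monomial $q$, turning each nonzero inner product into an honest polynomial, and factor it; the claim is that each such polynomial is (up to sign and a monomial factor) one of $p_F^{(1)},\ldots,p_F^{(6)}$.

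The bulk of the verification is routine but voluminous: there are 28 inner products to expand, conjugate-reduce, clear, and factor, and the explicit $p_F^{(i)}$ are each eight-term polynomials in several variables. I would carry this out with a computer algebra system, exactly the style of proof already used for Lemmas~\ref{TClemma} and~\ref{TDlemma}, and in the write-up simply assert that the factorization succeeds, mirroring the one-line proofs given there. The conceptual content is entirely in the factorization pattern; no bound or inequality is needed here, since Lemma~\ref{TFlemma} only characterizes the Hadamard condition and defers the unimodularity of the solved variables to the subsequent theorem.

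The main obstacle I anticipate is not difficulty but bookkeeping: confirming that the 28 inner products really collapse onto only \emph{six} distinct factors (each appearing several times, up to monomial and sign), rather than producing additional independent constraints. I would organize the check by the block structure of $T_{8F}$ — the matrix splits naturally into $2\times 2$ and $4\times 4$ sign-patterned blocks — and use that structure to predict which row pairs yield $p_F^{(1)}$ through $p_F^{(4)}$ (the \say{local} constraints, each in few variables) versus the two longer palindromic constraints $p_F^{(5)}$ and $p_F^{(6)}$ coming from the inner products that mix the top and bottom halves. Verifying that no orthogonality relation escapes this list is the one step where care is required, and it is what makes the explicit machine computation, rather than a structural shortcut, the safest route.
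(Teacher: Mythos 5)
Your proposal is correct and is essentially the paper's own argument: the paper's proof of Lemma~\ref{TFlemma} is exactly the one-line assertion that all orthogonality requirements factor onto the six polynomials $p_F^{(1)},\ldots,p_F^{(6)}$, with the variables forced into $\mathbb{T}$ by unimodularity of the entries, just as in Lemmas~\ref{TClemma} and~\ref{TDlemma}. Your write-up merely makes explicit the routine machine verification (28 inner products, conjugates replaced by reciprocals, monomial denominators cleared) that the paper leaves implicit.
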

\begin{proof}
     All the orthogonality requirements can be factored to depend only on the given polynomials.
\end{proof}

We can notice that assignment $(a,b,c,d,e,f,g,h)=(a,b,b,e,e,f,f,a)$ satisfy the equations $p^{(1)}_F=0$, $p^{(2)}_F=0$, $p^{(3)}_F=0$ and $p^{(4)}_F=0$. Additionally, $-p^{(5)}_F(a,b,b,e,e,f,f,a)=p^{(6)}_F(a,b,b,e,e,f,f,a)=a b e f\cdot p_B(a^2,e^2,f^2,b^2)$. Thus, we are left with the orthogonality constraint $p_B(a^2,e^2,f^2,b^2)=0$, which can be solved in the same manner as in Theorem~\ref{TB}. Introducing suitable variable changes, we can notice that the matrices given by this solution are all equivalent to matrices in $T_{8B}^{(3)}$.

There are also matrices we have not seen before in $T_{8F}(\mathbb{T}^8)$ and we need the following lemma to find them.

\begin{lemma}
\label{TFlemma2}
The system of equations given in Lemma~\ref{TFlemma} is equivalent to the system
     \begin{equation}
 \label{F2}
\begin{cases}
    \Im(b)\Re(e)-\Im(c)\Re(d)=0\\
    \Im(a)\Im(e)-\Im(d)\Im(h)=0\\
    \Re(b)\Im(f)-\Re(e)\Im(g)=0\\
    \Re(a)\Re(f)-\Re(g)\Re(h)=0\\
    \Im(ab)\Re(ef)-\Re(d\overline{g})\Im(h\overline{c})=0\\
    \Im(a\overline{b})\Re(e\overline{f})-\Re(dg)\Im(hc)=0
\end{cases},
\end{equation}
where $a,b,c,d,e,f,g,h\in\mathbb{T}.$
\end{lemma}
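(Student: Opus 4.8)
The plan is to reduce each constraint $p^{(i)}_F=0$ separately to a single bilinear equation in the real and imaginary parts of the variables, in the same spirit as the treatment in Lemma~\ref{Vlemma}. The two tools are Lemma~\ref{lemma_of_palindrome} and the product-to-sum identities, valid for $x,y\in\mathbb{T}$: $\Im(xy)+\Im(x\overline y)=2\Im(x)\Re(y)$, $\Im(xy)-\Im(x\overline y)=2\Re(x)\Im(y)$, $\Re(xy)+\Re(x\overline y)=2\Re(x)\Re(y)$ and $\Re(x\overline y)-\Re(xy)=2\Im(x)\Im(y)$.

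First I would observe that each $p^{(i)}_F$ has exactly eight monomials and that every variable occurring in it reaches degree two, so it is either palindromic or anti-palindromic; a direct inspection shows that $p^{(2)}_F$ and $p^{(4)}_F$ are palindromic while $p^{(1)}_F$ and $p^{(3)}_F$ are anti-palindromic. Because all the exponents $\alpha_i$ equal two, the balancing monomial $x_1^{\alpha_1/2}\cdots x_k^{\alpha_k/2}$ of Lemma~\ref{lemma_of_palindrome} is an ordinary Laurent monomial and no square root has to be chosen. Multiplying $p^{(i)}_F$ by the inverse of the product of its variables sorts the eight terms into four reciprocal pairs, and under unimodularity each pair collapses to $2\Re$ of a product (palindromic case) or to $2i\,\Im$ of a product (anti-palindromic case). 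A single application of the identities above then turns $p^{(1)}_F,\dots,p^{(4)}_F$ into the first four lines of \eqref{F2}; this is routine sign bookkeeping.

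The real work is in $p^{(5)}_F$ and $p^{(6)}_F$, which involve all eight variables. Both are anti-palindromic, and the first reduction produces a sum of four imaginary parts whose arguments are products of four variables; for $p^{(5)}_F$ one reaches a nonzero multiple of $\Im(abef)+\Im(ab\overline{ef})+\Im(cd\overline{gh})+\Im(cg\overline{dh})$. The decisive step is to spot the right second-level factorisations: the obvious $abef=(ab)(ef)$ and $ab\overline{ef}=(ab)\overline{(ef)}$, but also the less evident $cd\overline{gh}=(c\overline h)(d\overline g)$ together with $cg\overline{dh}=(c\overline h)\overline{(d\overline g)}$. A second use of the product-to-sum identities, now with the composite unimodular quantities $ab,\,ef,\,c\overline h,\,d\overline g$ playing the role of $x,y$, compresses the four terms into $2\Im(ab)\Re(ef)+2\Im(c\overline h)\Re(d\overline g)$, which after $\Im(c\overline h)=-\Im(h\overline c)$ is exactly the fifth line of \eqref{F2}. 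The parallel groupings $ae\overline{bf}=(a\overline b)(e\overline f)$, $af\overline{be}=(a\overline b)\overline{(e\overline f)}$ and $cdgh=(dg)(ch)$, $dg\overline{ch}=(dg)\overline{(ch)}$ settle $p^{(6)}_F$ and deliver the sixth line.

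I expect the main obstacle to be precisely this regrouping for $p^{(5)}_F$ and $p^{(6)}_F$: after the first reduction one has four imaginary parts of length-four products, and it is not visible a priori which pairing exposes the common composite factor ($c\overline h$ and $dg$, respectively) that allows the identities to fire again. Once these factorisations are found the remaining computation is forced. Finally, since every manipulation used — multiplication by a nonvanishing monomial, the reciprocal pairing of Lemma~\ref{lemma_of_palindrome}, and the product-to-sum identities — is an equivalence given the standing hypothesis $a,\dots,h\in\mathbb{T}$, each $p^{(i)}_F=0$ is equivalent to the corresponding line of \eqref{F2}, and therefore the two systems are equivalent.
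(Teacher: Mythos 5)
Your proposal follows essentially the same route as the paper's proof: the paper likewise observes that each $p^{(i)}_F$ is palindromic or anti-palindromic, invokes Lemma~\ref{lemma_of_palindrome} to pass to the Laurent form, factors the eight terms into products of $x\pm 1/x$ pairs, and converts these to $\Re/\Im$ expressions via unimodularity. The paper carries this out explicitly only for $p^{(1)}_F$ and dismisses the remaining five constraints with ``dealt with in a similar manner'', so your second-level regroupings for $p^{(5)}_F$ and $p^{(6)}_F$ --- e.g.\ $cd\overline{gh}=(c\overline{h})(d\overline{g})$ together with $cg\overline{dh}=(c\overline{h})\overline{(d\overline{g})}$, and $cdgh=(dg)(ch)$ with $dg\overline{ch}=(dg)\overline{(ch)}$ --- are precisely the content the paper leaves implicit, and I have verified that they produce the fifth and sixth lines of \eqref{F2} correctly.

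One caveat concerning your claim that the first four reductions are ``routine sign bookkeeping'' matching \eqref{F2} verbatim: the polynomial $p^{(3)}_F$ involves only the variables $b,c,f,g$, so its reduction (dividing by $bcfg$ and pairing $bf-\tfrac{1}{bf}$ with $\tfrac{f}{b}-\tfrac{b}{f}$, and $\tfrac{c}{g}-\tfrac{g}{c}$ with $\tfrac{1}{cg}-cg$) yields $\Re(b)\Im(f)-\Re(c)\Im(g)=0$. It cannot produce the printed third line $\Re(b)\Im(f)-\Re(e)\Im(g)=0$, which mentions $e$. This is evidently a typo in the statement --- every other line of \eqref{F2} involves exactly the four variables of the corresponding $p^{(i)}_F$, and only line three breaks the pattern --- but your proposal asserts agreement with the printed system rather than detecting and correcting the discrepancy, so as written your derivation of the third line would fail. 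With $\Re(e)$ replaced by $\Re(c)$ your argument goes through in full.
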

\begin{proof}
    The polynomials $p^{(i)}_F$ are palindromic or anti-palindromic for all $i\in\{1,\ldots,6\}$. By Lemma~\ref{lemma_of_palindrome}, we can write $p_F^{(1)}=0$ as 
    $$b e-\frac{1}{b e}+\frac{b}{e}-\frac{e}{b}+\frac{d}{c}-\frac{c}{d}+\frac{1}{c d}-c d=0$$
    This can be factored to 
    $$\left(b-\frac{1}{b}\right) \left(e+\frac{1}{e}\right)-\left(c-\frac{1}{c}\right) \left(d+\frac{1}{d}\right)=0$$
    and by unimodularity of the variables, this is equivalent to
    $$\Im(b)\Re(e)-\Im(c)\Re(d)=0.$$
    The other constraints can be dealt with in a similar manner.
\end{proof}

We can solve the system \eqref{F2} by introducing different variables for real and imaginary parts and adding unimodularity constraints between them.

\begin{theorem}
\label{TF}
    Let $T_{8F}:=\mathcal{S}_F(D)$, where
    $$\mathcal{S}_F(a_r,b_r,c_i)=T_{8F}(a_r+ia_i,b_r+ib_i,c_r+ic_i,d',e',f',g',h'),$$
    with
    \begin{alignat*}{3}
        &a_i=\sqrt{1-a_r^2},\;\;\; && b_i=\sqrt{1-b_r^2},  &&c_r=\sqrt{1-c_i^2},\\
        &d'=d_r+id_i,&&d_r=\frac{b_i \sqrt{ A_2}}{c_i\sqrt{ B_1B_2}}, &&d_i=i\frac{a_i \sqrt{A_1B_3}}{|c_i|  \sqrt{B_1B_2}},\\
        &e'=e_r+ie_i,&&e_r=\frac{ \sqrt{A_2}}{\sqrt{B_1B_2} },\quad&&e_i=i\frac{a_r b_i \sqrt{A_1}}{\sqrt{B_1B_2 }},\\
        &f'=f_r+if_i,&&f_r=-\frac{a_ib_r|c_i|\sqrt{A_1} } { c_i\sqrt{B_1B_4} },\;\; &&f_i=i\frac{|c_i|\sqrt{A_2}}{c_i\sqrt{B_1B_4}},\\
        &g'=g_r+ig_i,&&g_i=\frac{a_r\sqrt{A_1B_3}}{c_r\sqrt{B_1B_4}}, &&g_i=i\frac{b_r|c_i|\sqrt{A_2}}{c_r c_i\sqrt{B_1B_4}},\\
        &h'=h_r+ih_i,&&h_r=-\frac{a_i b_r c_r|c_i|}{\sqrt{B_3}c_i}, &&h_i=\frac{ a_r b_i |c_i|}{\sqrt{B_3}},
    \end{alignat*}
    where
    \begin{alignat*}{2}
        &A_1=(c_i - b_i) (b_i + c_i), && A_2=-a_i^4 b_r^2+a_r^4 c_i^2+a_i^2 b_r^2 c_i^2-a_r^2 b_r^2 c_i^2,\\
        &B_1=(a_r - b_r) (a_r + b_r),&&B_2=-a_r^2 b_r^2 + a_r^2 c_i^2 + c_r^2,\\
        &B_3=a_i^2 b_r^2 + a_r^2 c_i^2 - b_r^2 c_i^2,\;\;&&B_4=-a_i^2 b_r^2 - a_r^2 c_i^2,
    \end{alignat*} 
        and $$D=\{(a_r,b_r,c_i)\in[-1,1]^3\;|\;d_r,e_r,f_r,g_r,h_r\in[-1,1]\}.$$
    $T_{8C}^{(3)}$ is a 3-parameter family of complex Hadamard matrices.
\end{theorem}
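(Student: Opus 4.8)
The plan is to follow exactly the same template established in the proofs of Theorems~\ref{TC_theorem}, the $T_{8D}$ theorem, and Theorem~\ref{TEtheorem}: first verify that the proposed closed-form assignment solves the reduced polynomial system, and then invoke the palindromic-root lemmas to certify unimodularity of every entry on the stated domain $D$. Here, however, the reduced system is the trigonometric-flavoured system \eqref{F2} from Lemma~\ref{TFlemma2} rather than the raw polynomial system, so the verification step splits into two parts.

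First I would substitute the given parametrization into \eqref{F2} and check each of the six scalar equations in turn. The real and imaginary parts $a_r,a_i,b_r,b_i,c_r,c_i$ are tied together by the three unimodularity relations $a_i=\sqrt{1-a_r^2}$, $b_i=\sqrt{1-b_r^2}$, $c_r=\sqrt{1-c_i^2}$, so every occurrence of $\Re(\cdot)$ and $\Im(\cdot)$ in \eqref{F2} becomes an explicit algebraic expression in the free real variables $a_r,b_r,c_i$. Plugging in the formulas for $d',e',f',g',h'$ and simplifying each equation to $0$ is a direct but bookkeeping-heavy computation; the auxiliary quantities $A_1,A_2,B_1,B_2,B_3,B_4$ are precisely the combinations that make these simplifications collapse, so I would present this as a routine verification (ideally checked symbolically) rather than writing out each cancellation.

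Second, I would establish that each of $d',e',f',g',h'$ lies in $\mathbb{T}$ on the domain $D$. Since each is written as $(\text{real part})+i(\text{imaginary part})$, it suffices to confirm that $\Re(\cdot)^2+\Im(\cdot)^2=1$ for each of them. For instance, for $e'$ one checks $e_r^2+e_i^2=(A_2+a_r^2b_i^2A_1)/(B_1B_2)=1$, which reduces to the algebraic identity $A_2+a_r^2b_i^2A_1=B_1B_2$ after using $b_i^2=1-b_r^2$; analogous identities handle $d',f',g',h'$. The square roots appearing in the numerators force the constraints $A_1\ge 0$, $A_2\ge0$, $B_1B_2\ge0$, $B_1B_3\ge0$, $B_1B_4\ge0$, and the domain $D$ is designed so that requiring $d_r,e_r,f_r,g_r,h_r\in[-1,1]$ (together with $a_r,b_r,c_i\in[-1,1]$) guarantees these sign conditions hold and that all five entries are genuinely unimodular. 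I would verify that the stated membership conditions of $D$ indeed imply every relevant radicand is nonnegative and every denominator nonzero.

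The main obstacle I anticipate is the second step: unlike the earlier families, where unimodularity followed cleanly from Lemma~\ref{pal1} or Lemma~\ref{pal2} applied to a single palindromic polynomial in the unknown, here the solution is produced by splitting each variable into real and imaginary parts and solving the bilinear system \eqref{F2} by hand, so there is no single palindromic polynomial to quote. Consequently the unimodularity $|d'|=|e'|=|f'|=|g'|=|h'|=1$ must be proved \emph{a posteriori} as a collection of explicit algebraic identities among $A_1,A_2,B_1,\ldots,B_4$, and one must be careful about branch choices of the square roots and about the sign factors such as $|c_i|/c_i$ that appear in several entries. Getting these sign conventions consistent across all five entries, and confirming that the domain $D$ is carved out precisely so that the radicands stay nonnegative, is the delicate part; the rest is mechanical substitution.
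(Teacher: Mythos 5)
Your proposal matches the paper's proof: the paper likewise verifies that the stated assignment solves the system of Lemma~\ref{TFlemma2} and then deduces unimodularity from the identities $x_r^2+x_i^2=1$ holding for all of the variables together with the requirement that the real parts lie in $[-1,1]$, which is exactly how the domain $D$ is cut out. Your extra attention to the square-root branches and the sign factors such as $|c_i|/c_i$ is a legitimate expansion of details the paper leaves implicit, not a different method.
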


\begin{proof}
    One can confirm that the assignment given is a solution for the system of equations in Lemma~\ref{TFlemma2}. Unimodularity is guaranteed when all the real parts get values from $[-1,1]$ because $x_r^2+x_i^2=1$ holds for all of the variables.
\end{proof}

Any dephased matrix from the family $T_{8B}^{(3)}$ has at least six entries that are equal to $-1$ in its core. A typical dephased matrix from the family $T^{(3)}_{8F}$ has only two entries that are equal to $-1$. Thus, the solution presented in Theorem~\ref{TF} gives us matrices inequivalent to the matrices of the previous solution that produced the family $T^{(3)}_{8B}$.

Once again, we find multiple solutions that differ only by signs; 128 in total. Numerical results suggest that all these solutions correspond to equivalent families. Similar considerations also indicate that the family $T^{(3)}_{8F}$ is likely symmetric.

\section{Inequivalence}
\label{ineq_sec}

To show that the introduced families are pairwise inequivalent and not contained in any known families, we provide matrices that belong exclusively to these new families. Often, such considerations are done by utilising invariants such as defect \cite{defOfuni}, fingerprint \cite{fingerprint} or Haagerup set \cite{haagerupSet}. We introduce an algorithm to determine the inclusion precisely. A more detailed study of such algorithms will be conducted in a separate study \cite{upcoming_paper}.

\subsection{Algorithm for determining family membership.} 
\label{alg}

In this section, we describe an algorithm that can be used to determine if an arbitrary matrix $H\in\mathbb{H}(n)$ has an equivalent matrix that belongs to a given family of complex Hadamard matrices. First, we introduce a condition that makes it straightforward to invert a matrix-valued function. We denote the set of the entries of a matrix $A=(a_{i,j})$ by $\{a_{i,j}\}$ and the multiset of entries by $\langle a_{i,j}\rangle$. 

Let $f:\mathbb{T}^k\to M_n(\mathbb{C})$ be a matrix-valued function. We say $f(x_1,\ldots,x_k)$ is \textit{iteratively invertible} if for all $i\in\{1,\ldots,k\}$ there exist an entry $\phi_i(x_1,\ldots,x_i)$ of $f$ such that fixing $x_1,\ldots,x_{i-1}$ makes $\phi_i$ bijective in $x_i$. We refer to such entries $\phi_1,\ldots,\phi_k$ as \textit{iterable entries}
 of $f$.
 
Suppose that $f(x_1,\ldots,x_k)$ is an iteratively invertible function with iterable entries $\phi_1,\ldots, \phi_k$ at indices $(i_1,j_1),\ldots,(i_k,j_k)$. We can now compute $(x_1',\ldots,x'_k)$ from $H=f(x_1',\ldots,x'_k)$ with
\begin{equation}
\label{iter_inv}
(\phi^{-1}_1(h_{i_1,j_1}),\;\phi^{-1}_2(\phi^{-1}_1(h_{i_1,j_1}),h_{i_2,j_2}),\ldots,\phi^{-1}_k(\phi_1^{-1}(h_{i_1,j_1}),\ldots,h_{i_k,j_k})),
\end{equation}
where  $\phi_i^{-1}(x_1,\ldots,x_i)$ denotes the inverse map of $x_i\mapsto \phi_i(x_1,\ldots,x_i)$ for fixed values of $x_1,\ldots,x_{i-1}.$

Let $H^{(k)}_n=f(D)$ be a family of complex Hadamard matrices, where $f:D\to\mathbb{H}_d(n)$ is an iteratively invertible function and $D\subseteq\mathbb{T}^k$. For a given matrix $H\in\mathbb{H}(n),$ we can decide if $H\in H^{(k)}_n$ with the following algorithm.

\begin{enumerate}[label=Step \arabic*., leftmargin=*, align=left]
\setcounter{enumi}{-1}
    \item Choose iterable entries  $\phi_1,\ldots,\phi_k$ of $f$ and compute $\phi^{-1}_1,\ldots,\phi^{-1}_k.$

    \item Find all mutually permutation inequivalent dephased matrices $H'$ equivalent to $H$.
    
\item For each $H' = (h'_{i,j})$, perform a backtracking search to find parameters $x' = (x'_1, \dots, x'_k)$, such that $\langle f_{i,j}(x') \rangle = \langle h'_{i,j} \rangle$, by evaluating the formula~\eqref{iter_inv} with elements from $\{h'_{i,j}\}$.

    \item For each pair $(H', x')$, determine if $f(x'_1,\ldots,x_k')$ and $H'$ are permutation equivalent.
\end{enumerate}

If we find a permutation equivalence in Step 3 we can output permutation matrices $P_1$ and $P_2$ as well as an evaluation point $(x_1'\ldots,x'_k)$ such that $\mathscr{D}(P_1HP_2)=f(x_1',\ldots,x_k')$ implying that $H\in H^{(k)}_n$. If no permutation equivalence is found, then we have exhausted all possibilities and conclude that $H\notin H_{n}^{(k)}$.  Permutation equivalence of two matrices can be determined with an algorithm from \cite{upcoming_paper}.

In practice, we need to do the arithmetic with floating-point numbers. Complex numbers $z_1$ and $z_2$ are considered equal if $|z_2-z_1|<\varepsilon$, where $\varepsilon$ is an accuracy parameter of the algorithm.

We do not need the functions $\phi_i$ to be necessary bijective in the whole domain $D$, but we can divide our considerations into different cases, usually concerning different branches of the square root. It is not difficult to find iterable entries $\phi_1,\;\phi_2,\;\phi_3$ in the functions $\mathcal{S}_{C},$ $\mathcal{S}_{D}$, $\mathcal{S}_{E}$ and $\mathcal{S}_{F}$. For example, for $\mathcal{S}_{E}$ we can choose the identity function three times given by the entries $a,\;b,$ and $c$. For $\mathcal{S}_{F}$ we can use for example functions $\phi_1(a)=[\mathscr{D}(\mathcal{S}_F)]_{8,2}=1/a^2$, $\phi_2(b)=[\mathscr{D}(\mathcal{S}_F)]_{7,2}=1/b^2$ and $\phi_3(a,b,c)=[\mathscr{D}(\mathcal{S}_F)]_{5,3}=1/e'(a,b,c)^2$ that are relatively easy to invert.

\begin{example}
    Consider an affine family $F_{4}^{(1)}:=F_4(\mathbb{T})$ given by 
    $$F_4(a)=
    \begin{bmatrix}
         1 & 1 & 1 & 1 \\
 1 & i a & -1 & -i a \\
 1 & -1 & 1 & -1 \\
 1 & -i a & -1 & i a \\
    \end{bmatrix}.$$
    Suppose that we want to determine whether a complex Hadamard matrix 
    $$
G=\begin{bmatrix}
 \gamma & \beta\gamma & \beta & -1 \\
 \overline{\beta} \gamma & \gamma & -1 & \overline{\beta} \\
 \overline{\beta}  & -1 & \gamma & \overline{\beta} \gamma \\
 -1 & \beta & \beta\gamma & \gamma \\
\end{bmatrix},
    $$
    where $\beta=\exp(\sqrt{-3})$ and $\gamma=\exp(\sqrt{-7})$,
    belongs to $F^{(1)}_4.$ 
    
    The function $F_4(a)$ has only one parameter, so we only need to choose one iterable entry. We can choose $\phi_1(a)=[F_4(a)]_{2,2}=ia$ and obtain its inverse  $\phi_1^{-1}(h)=-ih.$

    In Step 1, we find all the dephased matrices that are equivalent to $G$ up to permutation equivalence. There are only two such matrices:
    $$
   G_1= \begin{bmatrix}
         1 & 1 & 1 & 1 \\
 1 & 1 & -1 & -1 \\
 1 & -1 & \gamma^2 & -\gamma^2 \\
 1 & -1 & -\gamma^2 & \gamma^2 \\
    \end{bmatrix}
    \text{ and } 
      G_2=  \begin{bmatrix}
     1 & 1 & 1 & 1 \\
 1 & 1 & -1 & -1 \\
 1 & -1 & \gamma^{-2} & -\gamma^{-2} \\
 1 & -1 & -\gamma^{-2} & \gamma^{-2} \\
 \end{bmatrix}.
    $$

In Step 2, we check if there exists  $h\in\{1,-1,\gamma^2,-\gamma^2\}$ such that the multiset of the entries of $F_4(\phi_1^{-1}(h))$ equals the multiset of the entries of $G_1$. We find two such matrices $F_4(-i\gamma^2)$ and $F_4(i\gamma^2).$ Similarly, for $G_2$ we find the matrices $F_4(-i\gamma^{-2})$ and $F_4(i\gamma^{-2}).$

Finally, in Step 3, we compute that $F_4(-i\gamma^2)$ and $F_4(i\gamma^2)$ are both permutation equivalent to $G_1$. Also, $F_4(-i\gamma^{-2})$ and $F_4(i\gamma^{-2})$ are both permutation equivalent to $G_2$. We can output any of the found equalities
\begin{align*}
 &   \mathscr{D}(P_1GP_2)=F_4(i\gamma^2),\qquad \mathscr{D}(P_2GP_2)=F_4(-i\gamma^2)\\
   & \mathscr{D}(P_3GP_2)=F_4(i\gamma^{-2})\;\text{ or }\mathscr{D}(P_4GP_2)=F_4(-i\gamma^{-2}),
\end{align*}
where $P_1=(e_1\;e_3\;e_4\;e_2)$, $P_2=(e_1\;e_3\;e_2\;e_4)$, $P_3=(e_4\;e_2\;e_1\;e_3)$, and $P_4=(e_2\;e_4\;e_1\;e_3),$ that all prove the inclusion $G\in F^{(1)}_4.$

When we run the same test with
    $$
G'=\begin{bmatrix}
 \gamma & \beta\gamma & \beta & -1 \\
 \overline{\beta} \gamma & \gamma & -1 & \overline{\beta} \\
 \overline{\beta}  & -1 & \gamma & \overline{\beta} \gamma \\
 -1 & \beta & -\beta\gamma & -\gamma \\
\end{bmatrix},
    $$
    which is not a complex Hadamard matrix, the algorithm proceeds the same way until Step 3. At that point, no satisfying permutations are found, and the algorithm returns a negative result.

In an implementation of the algorithm, a depth-first traversal is more natural, as it allows for immediate termination upon finding a valid solution.

\end{example}

\subsection{Inequivalence of the families}

The following result shows that we have indeed found previously unknown matrices.

\begin{theorem}
\label{inequivalence}
    Let $H^{(k)}_8$ be a $k-$parameter family of complex Hadamard matrices of order 8 from the list
    \begin{equation}
    \label{list}
    \begin{split}
        F^{(5)}_{8},\; D^{(5)}_{8A},\; D^{(5)}_{8B},\; &S_{8A}^{(4)},\; S^{(4)}_{8B}, \\
        T_{8B}^{(3)},\; T_{8C}^{(3)},\; T^{(3)}_{8D},\;(T^{(3)}_{8D})^T,&\; T^{(3)}_{8E},\; (T^{(3)}_{8E})^T,\; T^{(3)}_{8F}.
    \end{split}
    \end{equation}
     Let $f:D\to\mathbb{H}(8)$ be the function defining $H^{(k)}_8$ as given in Sections~\ref{sec_4_prev_mat} and~\ref{results}.
    Consider $H'=f(e^{2i},e^{3i},\dots,e^{(k+1)i})\in\mathbb{H}(8)$. The only family from the list \eqref{list} to which $H'$ belongs is $H_8^{(k)}$ itself.
\end{theorem}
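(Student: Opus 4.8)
The plan is to establish two things for each of the twelve families: first, that the chosen evaluation point actually lies in the relevant domain so that $H'$ is a genuine complex Hadamard matrix belonging to $H^{(k)}_8$, and second, that $H'$ is \emph{not} equivalent to any matrix in any of the other eleven families. The membership of $H'$ in $H^{(k)}_8$ is immediate from the definition, provided the point $(e^{2i},\dots,e^{(k+1)i})$ lies in $D$; I would verify this by checking the explicit domain inequalities (such as $C/(a^8b^8c^4)\le 0$ and the nonvanishing of the various $B_j$) numerically at the prescribed point for each family. The nontrivial content is the pairwise non-inclusion, and for this I would invoke the membership algorithm of Section~\ref{alg} directly.

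Concretely, for each family $H^{(k)}_8$ in the list I would compute the specific matrix $H'=f(e^{2i},e^{3i},\dots,e^{(k+1)i})$, and then run the algorithm of Section~\ref{alg} with $H'$ as input against each of the other eleven families $G^{(m)}_8$ in turn. The algorithm first enumerates all dephased matrices permutation-equivalent to $H'$ (Step~1), then performs the backtracking search using the iterable entries $\phi_1,\dots,\phi_m$ of the candidate family's defining function to reconstruct all parameter tuples $x'$ whose output has the same multiset of entries as each dephased representative (Step~2), and finally tests permutation equivalence (Step~3). Since the iterable entries for $\mathcal{S}_C,\mathcal{S}_D,\mathcal{S}_E,\mathcal{S}_F$ have been exhibited (and the affine families $F^{(5)}_8$, $D^{(5)}_{8A}$, $S^{(4)}_{8A}$ together with the transposed families admit obvious iterable entries from their phase matrices), the algorithm terminates and returns a negative answer in each of the cross-family tests. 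The transcendental choice of arguments $2,3,\dots,k+1$ is what makes this work: these values are algebraically generic, so the reconstructed parameters are forced to reproduce the same transcendental entries, and a spurious match across families would require a nongeneric algebraic coincidence that the irrationality of the $e^{ji}$ rules out.

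The main obstacle I expect is twofold. First, the algorithm as stated relies on floating-point arithmetic with an accuracy parameter $\varepsilon$, so the proof is really a \emph{computer-assisted} verification rather than a closed-form argument; to make it rigorous one must argue that the negative results are stable, i.e.\ that no genuine equivalence is being missed because two transcendentally distinct entries happen to fall within $\varepsilon$ of each other. The generic irrationality of $e^{ji}$ makes such near-collisions implausible, and one can certify this by checking that the pairwise gaps among all relevant entries exceed $\varepsilon$ by a comfortable margin. Second, one must correctly handle the multiple solution branches (the sign choices in the square roots for $T^{(3)}_{8C}$, $T^{(3)}_{8D}$, and the $128$ branches for $T^{(3)}_{8F}$): the backtracking search in Step~2 must exhaust all branches of each $\phi_i^{-1}$, and I would note explicitly that the algorithm's design already divides the search into these cases. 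Granting the correctness of the permutation-equivalence subroutine of \cite{upcoming_paper} and the stability of the floating-point comparisons, the exhaustive search yields the claim that $H'$ lies in $H^{(k)}_8$ and in no other family on the list.
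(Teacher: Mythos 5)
Your proposal takes essentially the same approach as the paper: the paper's proof likewise consists of checking that the point $(e^{2i},e^{3i},\dots,e^{(k+1)i})$ lies in the domain of every family in the list and then running an exhaustive computer search with the algorithm of Section~\ref{alg} to rule out membership in the other families. Your additional remarks on exhausting the square-root branches and on the stability of the floating-point comparisons merely make explicit caveats that the paper leaves implicit in its appeal to the computer search.
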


\begin{proof}
    One can check that the point $(e^{2i},e^{3i},\dots,e^{(k+1)i})$ belongs to the domains of all the families in the list \eqref{list}. An Exhaustive computer search with the algorithm described in Section~\ref{alg} yields the result.
\end{proof}

An immediate corollary of Theorem~\ref{inequivalence} is that all the families in the list \eqref{list} are pairwise inequivalent, and no family is contained within another. One can also confirm that the point $(e^{2i},e^{3i},\ldots,e^{ki})$ is not a boundary point of any of the domains, meaning there exists a neighbourhood around that point belonging uniquely to the corresponding family.

\subsection{Butson-type matrices }

The classification of complex Hadamard matrices of Butson-type into equivalence classes has been completed for many small parameter values of $n$ and $q$. These results are extensively documented in \cite{butson}. A study of the Butson matrices suggests that there are still order 8 families to discover. 

In \cite{sölBut}, all BH$(8, 4)$ Butson-type matrices are placed in some affine family from the list \eqref{affine_list}. For BH$(8,6)$ matrices, such a categorisation to known families is not possible as there are matrices that do not belong any of the families from the list \eqref{list}. Consider a BH$(8,6)$ matrix

$$B_1=
\begin{bmatrix}
\omega^0 & \omega^ 0 & \omega^ 0 & \omega^ 0 & \omega^ 0 & \omega^ 0 & \omega^ 0 & \omega^ 0 \\
 \omega^0 & \omega^ 0 & \omega^ 0 & \omega^ 0 & \omega^ 3 & \omega^ 3 & \omega^ 3 & \omega^ 3 \\
 \omega^0 & \omega^ 1 & \omega^ 3 & \omega^ 4 & \omega^ 0 & \omega^ 2 & \omega^ 3 & \omega^ 5 \\
 \omega^0 & \omega^ 2 & \omega^ 5 & \omega^ 3 & \omega^ 4 & \omega^ 5 & \omega^ 2 & \omega^ 1 \\
 \omega^0 & \omega^ 3 & \omega^ 2 & \omega^ 5 & \omega^ 2 & \omega^ 5 & \omega^ 1 & \omega^ 4 \\
 \omega^0 & \omega^ 3 & \omega^ 4 & \omega^ 1 & \omega^ 5 & \omega^ 2 & \omega^ 0 & \omega^ 3 \\
 \omega^0 & \omega^ 4 & \omega^ 1 & \omega^ 3 & \omega^ 3 & \omega^ 2 & \omega^ 5 & \omega^ 0 \\
 \omega^0 & \omega^ 5 & \omega^ 3 & \omega^ 2 & \omega^ 1 & \omega^ 5 & \omega^ 4 & \omega^ 2 \\    
\end{bmatrix}$$
and its inequivalent transpose $B_2=B_1^T$, where $\omega=\exp(2\pi i/6).$

\begin{proposition}
    The matrices $B_1$ and $B_2$ do not belong in any of the families in the list \eqref{list}.
\end{proposition}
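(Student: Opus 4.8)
The plan is to run the family-membership algorithm of Section~\ref{alg} on $B_1$ (and on $B_2$) against each of the twelve families in the list~\eqref{list} and to confirm that every run returns a negative answer. Since the algorithm decides inclusion precisely, a complete set of negative outcomes establishes the proposition, exactly as in the exhaustive search underlying Theorem~\ref{inequivalence}.

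First I would carry out Step~0 for every family, i.e.\ exhibit iterable entries $\phi_1,\ldots,\phi_k$ together with their inverses. For the affine families $F_8^{(5)},D_{8A}^{(5)},D_{8B}^{(5)},S_{8A}^{(4)},S_{8B}^{(4)}$ and for $T_{8B}^{(3)}$ the parameters occur, up to a fixed root of unity, as single core entries, so suitable $\phi_i$ are immediate; for $\mathcal{S}_C,\mathcal{S}_D,\mathcal{S}_E,\mathcal{S}_F$ the excerpt already records easy choices, for instance the identity entries of $\mathcal{S}_E$ and the entries $1/a^2$, $1/b^2$, $1/e'(a,b,c)^2$ of $\mathscr{D}(\mathcal{S}_F)$. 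Next comes Step~1: because every entry of $B_1$ is one of the six sixth roots of unity, dephasing produces only finitely many cores, so I would enumerate the complete finite set of mutually permutation-inequivalent dephased matrices equivalent to $B_1$. Steps~2 and~3 then reduce to a finite backtracking search: for each dephased representative I invert the chosen iterable entries on the available entry values via formula~\eqref{iter_inv}, collect the candidate parameter vectors $x'$ whose image has the correct multiset of entries, and test each resulting matrix for permutation equivalence with the representative. For all twelve families this search exhausts without producing a permutation equivalence.

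It is worth noting that $B_2$ does not require twelve further runs. The list~\eqref{list} is closed under transpose except that the symmetry of $T_{8C}^{(3)}$ and $T_{8F}^{(3)}$ is only conjectured: $F_8^{(5)}$ and $T_{8B}^{(3)}$ are symmetric, while $D_{8A}^{(5)},D_{8B}^{(5)}$, $S_{8A}^{(4)},S_{8B}^{(4)}$, $T_{8D}^{(3)},(T_{8D}^{(3)})^T$ and $T_{8E}^{(3)},(T_{8E}^{(3)})^T$ occur as transpose pairs. Hence, since $B_2=B_1^{T}$ lies in a family $G$ iff $B_1$ lies in $G^{T}$, the non-membership of $B_2$ in all of these families follows formally from the runs for $B_1$. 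The only genuinely new runs are $B_2$ against $T_{8C}^{(3)}$ and $T_{8F}^{(3)}$, which I would perform directly.

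The main obstacle is bookkeeping rather than conceptual. The families $T_{8C}^{(3)}$, $T_{8D}^{(3)}$ and $T_{8F}^{(3)}$ are defined through square roots, so the inverse maps $\phi_i^{-1}$ are multivalued and the search must be split over all sign branches, up to the $128$ branches noted for $\mathcal{S}_F$, to be certain that no branch accidentally reproduces $B_1$ or $B_2$. A secondary point is numerical: the arithmetic uses the tolerance $\varepsilon$ of Section~\ref{alg}, but since all relevant quantities are algebraic combinations of sixth roots of unity whose pairwise distances are bounded away from $0$, a modest $\varepsilon$ separates them unambiguously, and if one wishes to avoid floating point entirely the entire search can be rerun exactly over $\mathbb{Q}(\omega)$. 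With the branches handled, the negative outcomes for every family yield the claim.
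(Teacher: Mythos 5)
Your proposal is correct and follows essentially the same route as the paper, whose proof is precisely an exhaustive computer search with the algorithm of Section~\ref{alg} applied to $B_1$ and $B_2$ against every family in the list~\eqref{list}. Your additional observations --- reducing the $B_2$ runs via the transpose relations among the families (with only $T_{8C}^{(3)}$ and $T_{8F}^{(3)}$ needing fresh runs, since their symmetry is only conjectured) and the care over square-root branches and the tolerance $\varepsilon$ --- are sound refinements of the same argument rather than a different approach.
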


\begin{proof}
    This is a result of an exhaustive computer search with the algorithm described in Section~\ref{alg}.
\end{proof}

We can also find Butson matrices with $n=8$ and $q>6$ that do not belong to any of the known families of complex Hadamard matrices.
\section*{Acknowledgements}
I would like to thank Professor Patric Östergård for his insightful guidance and valuable comments.

\bibliographystyle{abbrv} 
\bibliography{refs} 

\begin{thebibliography}{10}

\bibitem{agaian}
S.~S. Agaian.
\newblock {\em Hadamard {M}atrices and {T}heir {A}pplications}.
\newblock Springer, Heidelberg, 1985.

\bibitem{mub2}
I.~Bengtsson, W.~Bruzda, {\AA}.~Ericsson, J.-{\AA}. Larsson, W.~Tadej, and K.~{\.Z}yczkowski.
\newblock Mutually unbiased bases and {H}adamard matrices of order six.
\newblock {\em J. Math. Phys.}, 48, 2007.

\bibitem{bruzda8}
W.~Bruzda.
\newblock Extension of the set of complex {H}adamard matrices of size 8.
\newblock {\em Math. Comput. Sci.}, 12:459--464, 2018.

\bibitem{bruzdaSH}
W.~Bruzda.
\newblock Block-circulant complex {H}adamard matrices.
\newblock {\em J. Math. Phys.}, 64:Paper No. 052201, 17, 2023.

\bibitem{Catalogue}
W.~Bruzda, W.~Tadej, and K.~Życzkowski.
\newblock Complex {H}adamard matrices - a {C}atalog.
\newblock 2006.
\newblock \url{https://chaos.if.uj.edu.pl/~karol/hadamard/index.html}, Accessed: 2024-09-25.

\bibitem{DitaDouble}
P.~Dita.
\newblock Hadamard matrices from mutually unbiased bases.
\newblock {\em J. Math. Phys.}, 51:072202, 20, 2010.

\bibitem{mub1}
T.~Durt, B.-G. Englert, I.~Bengtsson, and K.~{\.Z}yczkowski.
\newblock On mutually unbiased bases.
\newblock {\em Int. J. Quantum Inf.}, 8:535--640, 2010.

\bibitem{weight_quantum}
R.~Egan.
\newblock A survey of complex generalized weighing matrices and a construction of quantum error-correcting codes.
\newblock {\em Discrete Math.}, 348:Paper No. 114201, 22, 2025.

\bibitem{equitengular}
C.~Godsil and A.~Roy.
\newblock Equiangular lines, mutually unbiased bases, and spin models.
\newblock {\em Eur. J. Comb.}, 30:246--262, 2009.

\bibitem{haagerupSet}
U.~Haagerup.
\newblock Orthogonal maximal abelian *-subalgebras of the $n\times n$ matrices and cyclic $n$-roots.
\newblock {\em Operator Algebras and Quantum Field Theory}, pages 296--322, 1996.

\bibitem{horadam}
K.~J. Horadam.
\newblock {\em Hadamard {M}atrices and {T}heir {A}pplications}.
\newblock Princeton University Press, New Jersey, 2007.

\bibitem{butson}
P.~H.~J. Lampio, F.~Szöllősi, and P.~R.~J. Östergård.
\newblock Repository of {B}utson matrices.
\newblock 2017.
\newblock \url{https://wiki.aalto.fi/display/Butson/Repository+of+BH+matrices}, Accessed: 2024-09-25.

\bibitem{sepctarl2}
M.~Matolcsi.
\newblock Fuglede’s conjecture fails in dimension 4.
\newblock {\em Proc. Amer. Math. Soc.}, 133:3021--3026, 2005.

\bibitem{S8mat}
M.~Matolcsi, J.~R\'effy, and F.~Szöllősi.
\newblock Constructions of complex {H}adamard matrices via tiling abelian groups.
\newblock {\em Open Syst. Inf. Dyn.}, 14:247--263, 2007.

\bibitem{errorbases}
B.~Musto and J.~Vicary.
\newblock Quantum {L}atin squares and unitary error bases.
\newblock {\em Quantum Inf. Comput.}, 16:1318--1332, 2016.

\bibitem{upcoming_paper}
P.~R.~J. \"Ostergård and T.~Valtonen.
\newblock Equivalence of complex {H}adamard matrices.
\newblock in preperation.

\bibitem{OA1}
S.~Popa.
\newblock Orthogonal pairs of *-subalgebras in finite von {N}eumann algebras.
\newblock {\em J. Operator Theory}, pages 253--268, 1983.

\bibitem{Szöllősi}
F.~Sz{\"o}ll{\H{o}}si.
\newblock {\em Construction, classification and parametrization of complex Hadamard matrices}.
\newblock PhD thesis, Central European University, 2011.

\bibitem{fingerprint}
F.~Szöllősi.
\newblock Exotic complex {H}adamard matrices and their equivalence.
\newblock {\em Cryptogr. Commun.}, 2:187--198, 2010.

\bibitem{sölBut}
F.~Szöllősi.
\newblock On quaternary complex {H}adamard matrices of small orders.
\newblock {\em Adv. Math. Commun.}, 5:309--315, 2011.

\bibitem{cgthm}
W.~Tadej and K.~\.Zyczkowski.
\newblock A concise guide to complex {H}adamard matrices.
\newblock {\em Open Syst. Inf. Dyn.}, 13:133--177, 2006.

\bibitem{defOfuni}
W.~Tadej and K.~\.Zyczkowski.
\newblock Defect of a unitary matrix.
\newblock {\em Linear Algebra Appl.}, 429:447--481, 2008.

\bibitem{S62}
T.~Tao.
\newblock Fuglede's conjecture is false in 5 and higher dimensions.
\newblock {\em Math. Res. Lett.}, 11:251--258, 2004.

\bibitem{teleport}
R.~F. Werner.
\newblock All teleportation and dense coding schemes.
\newblock {\em J. Phys. A}, 34:7081, 2001.

\end{thebibliography}
\end{document}